\documentclass[12pt]{amsart}
\usepackage{a4wide,graphicx}
\usepackage{color}
\usepackage{caption}
\usepackage{comment}
\usepackage{enumerate}
\usepackage[left=1in, right=1in]{geometry}
\usepackage{bbm,yfonts}
\usepackage{hyperref} 
\usepackage{wasysym}
\usepackage{subfig}
\usepackage{pifont}

\let\pa\partial  
  
\let\eps\varepsilon  

\newcommand{\N}{{\mathbb N}} 
\newcommand{\R}{{\mathbb R}}

\newcommand{\Cl}{\operatorname{Cl}}
\newcommand{\dx}{\mathrm{d}x}
\newcommand{\D}{\operatorname{D}}
\newcommand{\T}{{{\mathbb T}}}

\newcommand{\A}{{\mathbb A}}

\newcommand{\dd}{{\mathrm{d}}}

\newcommand{\entropy}{\mathcal{H}}

\newcommand{\F}{{\mathcal F}}
\newcommand{\E}{{\mathcal E}}
\newcommand{\J}{{\mathbb J}}
\newcommand{\M}{{\mathcal M}}
\renewcommand{\H}{{\mathsf H}}
\newcommand{\Fd}{{\mathcal F}_{\mathrm{d}}}
\newcommand{\Ad}{{\mathbb A}_{\mathrm{d}}}
\newcommand{\Ham}{{\mathcal H}}

\theoremstyle{plain}
\newtheorem{theorem}{Theorem}[section]   
   
\newtheorem{proposition}[theorem]{Proposition}

\theoremstyle{definition}

\theoremstyle{remark}
\newtheorem{remark}{Remark}[section]

\begin{document}

\title[~]{Well-posedness and convergence of a numerical scheme for the corrected 
Derrida-Lebowitz-Speer-Spohn equation using the Hellinger distance}

 
 \author{Mario Bukal}
 \address{University of Zagreb,
 Faculty of Electrical Engineering and Computing\newline
 Unska 3, 10000 Zagreb, Croatia}
 \email{mario.bukal@fer.hr}
 \thanks{This work has been supported by the Croatian Science
 Foundation under Grant agreement No.~7249 (MANDphy) and in part by the
bilaterial project No.~HR 04/2018 between OeAD and MZO}

\keywords{fourth-order evolution equation, entropy methods, Hellinger distance, 
structure preserving numerical scheme}
\subjclass[2010]{35K30, 35B45, 35Q99, 65M06, 65M12}

 \begin{abstract}
In this paper we construct a unique global in time weak nonnegative solution to the corrected
Derrida-Lebowitz-Speer-Spohn equation, which statistically describes the interface
fluctuations between two phases in a certain spin system. The construction of the weak solution 
is based on the dissipation of a Lyapunov functional which equals to the square of the Hellinger 
distance between the solution and the constant steady state. 
Furthermore, it is shown that the weak solution 
converges at an exponential rate to the constant steady state in the Hellinger distance and thus
also in the $L^1$-norm.
Numerical scheme which preserves the variational structure of the equation is devised and 
its convergence in terms of a discrete Hellinger distance is demonstrated.
 \end{abstract}
\date{\today}

\maketitle

\section{Introduction} 
Nonlinear evolution equations with higher-order
spatial derivatives appear as approximate
models in various contexts of mathematical physics. Besides the Cahn-Hillard equation \cite{CaHi58},
the most prominent models are
various thin-film equations describing dynamics of the thickness of a thin viscous fluid film
\cite{CDGKSZ93, ODB97, Mye98, Ber98}. 
In the case of free boundary film, the dynamics is driven by the competition between the 
surface tension and another potential force like gravity, capillarity, heating, Van der Waals, etc., 
which leads to a fourth-order evolution equation. Similarly, in the case when the fluid is covered by 
a thin elastic plate, then the pressure in the fluid is balanced by the sum of the bending of the plate 
and a potential force, which eventually leads to a sixth-order evolution equation \cite{HoMa04,LPN13}.
Many other higher-order models related to modelling of isolation oxidation of silicon in
classical semiconductors \cite{King89}, approximation of quantum effects in quantum semiconductors \cite{DMR05}, 
description of Bose-Einstein condensate \cite{JPR06}, image analysis\cite{BHS09}, etc.~can be found in the literature.

In this paper we study particular
fourth-order evolution equation  
\begin{equation}\label{1.eq:eDLSS}
\pa_t u 
= -\frac{1}{2}\left(u(\log u)_{xx}\right)_{xx} + 2\delta\left(u^{3/4}(u^{1/4})_{xx}\right)_x\,,
\end{equation}
which has been derived in \cite{BGT15} as a corrected version of the well known
Derrida-Lebowitz-Speer-Spohn (DLSS for short) equation. The latter first appeared in 
\cite{DLSS91A} in the form of (\ref{1.eq:eDLSS}) with $\delta = 0$. Unknown $u$ in 
(\ref{1.eq:eDLSS}) denots 
the density function of a probability distribution which asymptotically describes 
the statistics of
interface fluctuations between two phases of spins in the anchored Toom model. 
For the later reference we call 
 (\ref{1.eq:eDLSS}) \emph{the corrected DLSS equation}. To complete the problem for 
 equation (\ref{1.eq:eDLSS})
 we assume periodic boundary conditions, i.e.~$x\in\T = [0,1)$, where
 endpoints of the interval $0$ and $1$ are identified, and we
 prescribe a nonnegative initial datum $u(0) = u_0\geq0$ a.e.~on $\T$. 
 
Since the seminal paper by Bernis and Friedmann \cite{BeFr90}, analysis of 
higher-order nonlinear evolution equations, especially thin-film equations,
has become an attractive field of interest in mathematics community. 
Existence of solutions and their qulitative properties like positivity, compact support, 
blow up, the long time asymptotics are among most important questions. 
Even the thin-film equation alone has very rich mathematical structure, which can be retrieved 
from \cite{BeGr, DGG98,
GiaOtt01, CaTo02}, and references therein. Adding lower (second) order unstable terms results 
in more complex dynamics \cite{WBB04, NoSi10}.

The original DLSS equation has been first analyzed by Bleher et.~al.~in \cite{BLS94}.
Employing the semigroup approach they proved the local in time existence of positive 
classical solutions. Moreover, they proved equivalence between strict positivity and
smoothness of the solution. The very same conclusions apply for the equation at hand.
Namely, the third-order term from (\ref{1.eq:eDLSS}) enters into the ``perturbation term''
in \cite{BLS94}, and all results apply analogously. The first construction of global
in time weak nonnegative solutions to the DLSS equation, accompanied with the long time
behavior analysis has been performed in \cite{JuPi00}. Later on many results related to the
DLSS equation have been achieved, we emphasize on \cite{JuMa08}, which generalizes the
result from \cite{JuPi00} to the multidimensional case, and \cite{GST09}, where the
gradient flow structure has been rigorously justified. Namely, it has been shown that 
the DLSS equation constitutes the gradient flow of the Fisher information functional 
with respect to the $L^2$-Wasserstein metric. Let us point out at this place that 
the third-order term in equation (\ref{1.eq:eDLSS}) also possesses a geometric structure,
it can be formally seen as a Hamiltonian flow of the Fisher information. A detailed
discussion on this is postponed to Section \ref{sec:dp}. To conclude on the well-posedness for
the DLSS equation, in \cite{Fis13} Fischer proved the uniqueness of weak solutions 
constructed by J\"ungel and Matthes in \cite{JuMa08}.


Our approach to the construction of global in time weak nonnegative solutions to 
equation (\ref{1.eq:eDLSS}) closely follows the ideas developed in \cite{JuPi00} and \cite{JuMa08}.
Unlike there, where the key source of a priori estimates is the dissipation of the 
Boltzmann entropy, our construction is based on the dissipation of the functional
\begin{equation*}
\E(u) = 2\int_\T (u - 2\sqrt{u} + 1)\dd x\,.
\end{equation*}
$\E$ is the only nontrivial ``zero-order'' functional, called {\em entropy} in further, 
for which we can formally prove the
dissipation along solutions to (\ref{1.eq:eDLSS}), see Section \ref{sec:dp} for more details.
Observe that $\E(u)$ is in fact proportional to the square of the 
Hellinger distance $\H(u,u_\infty)$ between $u$ and $u_\infty = 1$
which is the steady state of equation (\ref{1.eq:eDLSS}). More precisely, 
$ \frac14\E(u) = \frac12\int_\T (\sqrt{u} - 1)^2\dd x \equiv \H^2(u,u_\infty)$.
The key source of our a priori estimates is the following entropy production inequality
\begin{equation*}
-\frac{\dd}{\dd t}\E(u(t)) \geq 4\int_\T (u^{1/4})_{xx}^2\dd x\,\,,\quad t>0\,,
\end{equation*}
valid along smooth positive solutions of (\ref{1.eq:eDLSS}). The above inequality then 
motivates to rewrite the original equation (\ref{1.eq:eDLSS}) in a novel form in terms of
$u^{1/4}$ (cf.~\cite{JuVi07})
\begin{equation}\label{1.eq:novel}
\pa_tu = 
- 2\left(u^{1/2}\left(u^{1/4}(u^{1/4})_{xx} - (u^{1/4})_x^2\right)\right)_{xx} + 
2\delta\left(u^{3/4}(u^{1/4})_{xx}\right)_x\,.
\end{equation}
Equation (\ref{1.eq:novel}) is equivalent to (\ref{1.eq:eDLSS}) for smooth and strictly 
positive solutions.
Now we can state the first result, which we prove in Section \ref{sec:weak}.
\begin{theorem}\label{tm:exist}
Let $u_0\in L^1(\T)$ be given nonnegative function of unit mass and of finite
entropy $\E(u_0)<\infty$. Let $T>0$ be given arbitrary time horizon. 
Then there exists a unique nonnegative unit mass function $u\in W^{1,1}(0,T;H^{-2}(\T))$ 
satisfying $u^{1/4}\in L^{2}(0,T;H^{2}(\T))$ and
\begin{equation*}
\int_0^T\langle\pa_tu,\phi\rangle_{H^{-2},H^2}\,\dd t + 
2\int_0^T\!\!\!\int_\T \left(u^{1/2}\left(u^{1/4}(u^{1/4})_{xx} - 
(u^{1/4})_x^2\right)\phi_{xx} + \delta u^{3/4}(u^{1/4})_{xx}\phi_x\right)\dd x\dd t = 0
\end{equation*}
for all test functions $\phi\in L^{\infty}(0,T;H^{2}(\T))$. 
\end{theorem}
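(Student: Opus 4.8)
The plan is to construct the solution by implicit time discretisation combined with an elliptic regularisation, cast in the entropy variable in the spirit of \cite{JuPi00,JuMa08}, and afterwards to establish uniqueness by a stability estimate. Throughout I work with $y=u^{1/4}$, in terms of which the two fluxes in \eqref{1.eq:novel} read $y^3y_{xx}-y^2y_x^2$ and $y^3y_{xx}$, and with the entropy variable $w=\E'(u)=2(1-u^{-1/2})$ conjugate to $u$ under $\E$; nonnegativity of the approximations is built into the substitution $u=(\E')^{-1}(w)$, extended to a globally defined smooth positive function of $w\in\R$. Fix $N\in\N$, set $\tau=T/N$, and given $u^{k-1}$ seek $w\in H^2(\T)$ with $u^k=(\E')^{-1}(w)$ such that, for all $\phi\in H^2(\T)$,
\begin{equation*}
\frac{1}{\tau}\int_\T (u^k-u^{k-1})\phi\,\dd x+2\int_\T\bigl(G[u^k]\phi_{xx}+\delta H[u^k]\phi_x\bigr)\dd x+\eps\int_\T\bigl(w_{xx}\phi_{xx}+w\phi\bigr)\dd x=0,
\end{equation*}
where $G[u]=u^{1/2}\bigl(u^{1/4}(u^{1/4})_{xx}-(u^{1/4})_x^2\bigr)$ and $H[u]=u^{3/4}(u^{1/4})_{xx}$. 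Existence of $w^k$ at each step follows from the Leray--Schauder fixed-point theorem, the coercive term $\eps(w_{xx}\phi_{xx}+w\phi)$ supplying $H^2$-coercivity and the a priori bound coming from the entropy estimate below; all nonlinearities are continuous on $H^2(\T)\hookrightarrow C^1(\T)$ in one dimension.

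\emph{A priori estimates.} Testing the scheme with $\phi=w^k$, convexity of $\E$ gives $\int_\T(u^k-u^{k-1})w^k\,\dd x\ge\E(u^k)-\E(u^{k-1})$, the flux terms reproduce the entropy production computed formally in Section \ref{sec:dp} (the conservative $\delta$-term dropping out after integration by parts), and the regularisation contributes the nonnegative term $\eps\|w^k\|_{H^2(\T)}^2$. This yields the discrete dissipation inequality
\begin{equation*}
\E(u^k)+c\,\tau\int_\T\bigl((u^k)^{1/4}\bigr)_{xx}^2\,\dd x+\eps\,\tau\,\|w^k\|_{H^2(\T)}^2\le\E(u^{k-1}).
\end{equation*}
Summation over $k$ and mass conservation (testing with $\phi\equiv1$, the fluxes vanishing by their divergence structure and the $\eps$-defect being recovered in the limit) give, for the piecewise-constant interpolants $u^{(\tau)}$ and $y^{(\tau)}=(u^{(\tau)})^{1/4}$, bounds uniform in $\tau$ and $\eps$: $\E(u^{(\tau)})\le\E(u_0)$ in $L^\infty(0,T)$, whence $y^{(\tau)}$ is bounded in $L^\infty(0,T;L^4(\T))$, together with $\int_0^T\|y^{(\tau)}\|_{H^2(\T)}^2\,\dd t\le C$. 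From the equation, and a Gagliardo--Nirenberg interpolation between $L^\infty(0,T;L^4)$ and $L^2(0,T;H^2)$ showing the fluxes lie in $L^1(0,T;L^2(\T))$, the discrete time increments of $u^{(\tau)}$ are bounded in $L^1(0,T;H^{-2}(\T))$.

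\emph{Passage to the limit.} A discrete Aubin--Lions--Simon argument then provides a subsequence with $y^{(\tau)}\to y$ strongly in $L^2(0,T;H^1(\T))$ and $y^{(\tau)}\rightharpoonup y$ weakly in $L^2(0,T;H^2(\T))$, hence $u^{(\tau)}\to u=y^4$ correspondingly. Since $H^1(\T)\hookrightarrow C(\T)$ in one dimension, this strong convergence controls $y$ and $y_x$ well enough to pass to the limit in every nonlinear flux; the decisive structural point is that the highest derivative $y_{xx}$ enters all fluxes linearly, so that its weak $L^2$-convergence against the strongly convergent coefficients already passes. The regularising terms vanish as $\eps\to0$ because $\eps\|w^{(\tau)}\|_{H^2}^2$ is bounded. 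One recovers the weak formulation of Theorem \ref{tm:exist}, with $u^{1/4}\in L^2(0,T;H^2(\T))$ and $u\in W^{1,1}(0,T;H^{-2}(\T))$; nonnegativity and unit mass persist in the limit.

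\emph{Uniqueness.} Given two solutions $u_1,u_2$ with common initial datum, I would subtract their weak formulations and (formally) test with $\psi=(-\pa_{xx})^{-1}(u_1-u_2)$, which is legitimate since $u_1-u_2$ has zero mean, so as to differentiate $\tfrac12\|u_1-u_2\|_{H^{-1}(\T)}^2$ in time and arrive at a Gronwall inequality. I expect this stability estimate, rather than the construction, to be the genuinely delicate part: the difference of the fourth-order fluxes $G[u_1]-G[u_2]$ involves second derivatives of $u_i^{1/4}$, controlled only in $L^2(0,T;L^2)$, so both the admissibility of the test function and the closing of the estimate require carefully exploiting the one-dimensional embedding $H^2(\T)\hookrightarrow C^1(\T)$ and the lower and upper bounds on $u_i$ available along the constructed strictly positive solutions, in the spirit of Fischer's uniqueness proof for the DLSS equation \cite{Fis13}.
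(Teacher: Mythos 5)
Your overall skeleton (implicit Euler in time, elliptic regularisation, entropy dissipation estimate, compactness, limit passage) matches the paper's strategy, but your construction rests on a substitution that does not exist. For $\E(u)=2\int_\T(u-2\sqrt u+1)\,\dd x$ one has $\E'(u)=2-2u^{-1/2}$, which maps $(0,\infty)$ onto $(-\infty,2)$ only, with inverse $(\E')^{-1}(w)=(1-w/2)^{-2}$ blowing up as $w\to 2^-$. There is therefore no "globally defined smooth positive extension" of $(\E')^{-1}$ to $w\in\R$: any modification beyond a threshold $w\le 2-\eta$ is a truncation, and precisely on the truncated region the two facts your scheme relies on fail, namely the convexity inequality $\int_\T(u^k-u^{k-1})w^k\dd x\ge\E(u^k)-\E(u^{k-1})$ (which needs $w^k=\E'(u^k)$ pointwise) and the entropy-production identity obtained by testing the fluxes with $\E'(u)$. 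Since no a priori pointwise upper bound on $u^k$ is available at this stage, you cannot guarantee the truncation is inactive, and the argument collapses. This is exactly why the paper keeps the logarithmic variable of \cite{JuPi00,JuMa08}: the unknown is $y=\log u$ (so $u=e^y>0$ is globally defined), and $\E'(u)=2-2e^{-y/2}$ is used only as a \emph{test function}. This decoupling has a price that your scheme hides: because the test function no longer coincides with the variable, a linear regularisation alone destroys the dissipation structure, and the paper must add the nonlinear term $\eps\left((\log u)_x^4 y_x\right)_x$ to make the regularisation dissipative against $\phi=2-2e^{-y/2}$. A smaller but related point: your Aubin--Lions step uses time-increment bounds on $u^{(\tau)}$ together with space bounds on $y^{(\tau)}=(u^{(\tau)})^{1/4}$, which are bounds on \emph{different} quantities; one needs the nonlinear Aubin--Lions lemma of \cite{CJL14} (and, as in the paper, additional estimates such as $\|u_\tau\|_{L^2(0,T;W^{2,1}(\T))}\le C$ via Lions--Villani, plus a strengthened production inequality) to close the limit passage rigorously.

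The uniqueness sketch is also not viable as proposed. Testing the difference of the equations with $(-\pa_{xx})^{-1}(u_1-u_2)$ and closing a Gronwall estimate requires exactly the pointwise lower (and upper) bounds on $u_i$ that you invoke, but the constructed solutions are merely \emph{nonnegative}: weak solutions of DLSS-type equations can touch zero, no minimum principle is available for fourth-order equations, and the failure of such duality arguments is precisely why uniqueness for the DLSS equation remained open until \cite{Fis13}. Fischer's proof is not an $H^{-1}$ contraction with positivity; it exploits the formulation of the equation in terms of $\sqrt u$ and a monotonicity structure (the operator $\A(v)=v^{-1}\left(v^2(v_{xx}/v)_x\right)_x$ is monotone), and it needs no strict positivity --- only the regularity $u^{1/4},\,u^{1/2}\in L^2(0,T;H^2(\T))$. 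The paper's uniqueness step consists in verifying that the constructed solution has exactly this regularity, satisfies the weak $\sqrt u$-formulation (the analogue of \cite[Lemma 15]{Fis13}), and that Fischer's computations extend to the third-order term in (\ref{1.eq:eDLSS}). As written, your uniqueness argument would at best give uniqueness within a class of solutions bounded away from zero, which is strictly weaker than the theorem's claim.
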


Concerning the question of the long time behaviour of weak solutions the approach
is somewhat different than usual. This is due to the lack of a ``global'' Beckner type inequality for 
the parameter required by the entropy functional $\E$, i.e.~a lack of a ``global'' entropy-entropy production
inequality. As a consequence, 
we cannot obtain time decay of the entropy functional $\E$ at a universal exponential rate.
Therefore, we employ appropriate ``asymptotic'' Beckner type inequality proved in \cite{CDGJ06},
which will eventually provide an exponential time decay of the entropy functional $\E$, but at a rate
depending on the chosen initial datum $u_0$, or more precisely on $\E(u_0)$.

Relation between the $L^1$ distance and the Hellinger distance, $\|u - v\|_{L^1(\T)} \leq 2\H(u,v)$,
then implies the following result.  
\begin{theorem}
\label{tm:ltb}
Let $u_0\in H^1(\T)$ be a nonnegative function of unit mass such that $\E(u_0) < +\infty$.
Then the weak solution constructed in Theorem \ref{tm:exist} converge at an
exponential rate to the constant steady state $u_\infty = 1$ in the norm
\begin{equation*}
\|u(t) - u_\infty\|_{L^1(\T)} \leq \sqrt{\E(u_0)}e^{-\lambda t}\,,\quad t> 0\,,
\end{equation*}
where $\lambda = 4\pi^4/(1+C\sqrt{\E(u_0)}$ and $C>0$ depends only on $\E(u_0)$.
\end{theorem}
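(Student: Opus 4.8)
The plan is to show that the entropy $\E(u(t))$ itself decays exponentially and then to transfer this decay to the $L^1$-norm through the relations already recorded in the excerpt, namely $\frac{1}{4}\E(u)=\H^2(u,u_\infty)$ and $\|u-u_\infty\|_{L^1(\T)}\le 2\H(u,u_\infty)$, which together give $\|u(t)-u_\infty\|_{L^1(\T)}\le\sqrt{\E(u(t))}$. Thus it suffices to prove $\E(u(t))\le\E(u_0)\,e^{-2\lambda t}$ with $\lambda=4\pi^4/(1+C\sqrt{\E(u_0)})$, and the whole argument reduces to a Gronwall estimate driven by the entropy production inequality.

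First I would record that, along the weak solution from Theorem~\ref{tm:exist}, the map $t\mapsto\E(u(t))$ is non-increasing and satisfies, in the integrated sense,
\begin{equation*}
\E(u(t))+4\int_s^t\!\!\int_\T\big((u^{1/4})_{xx}\big)^2\,\dd x\,\dd\tau\le\E(u(s)),\qquad 0\le s\le t.
\end{equation*}
This is exactly the entropy production inequality used in the construction, and for the (non-smooth) weak solution it is justified by weak lower semicontinuity after passing to the limit in the approximating problem; the extra regularity $u_0\in H^1(\T)$ guarantees that $\E(u_0)$ and the production term are finite and that the estimate propagates in time. In particular $\E(u(t))\le\E(u_0)$ for all $t>0$, so the trajectory remains confined to the sublevel set $\{\E\le\E(u_0)\}$, on which a quantitative functional inequality with constant depending only on $\E(u_0)$ becomes available.

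The heart of the matter, and the step I expect to be the main obstacle, is the functional (asymptotic Beckner--type) inequality relating production and entropy. Writing $w=u^{1/4}$, so that $\E(u)=2\int_\T(w^2-1)^2\,\dd x$ and the unit-mass constraint reads $\int_\T w^4\,\dd x=1$, I would establish
\begin{equation*}
\int_\T\big(w_{xx}\big)^2\,\dd x\ \ge\ \frac{4\pi^4}{1+C\sqrt{\E(u_0)}}\int_\T(w^2-1)^2\,\dd x,
\end{equation*}
which is precisely the asymptotic inequality of \cite{CDGJ06} specialised to the exponent dictated by $\E$. The mechanism is: decompose $w=\bar w+g$ with $\int_\T g\,\dd x=0$, apply the periodic Poincar\'e inequality twice to obtain $\int_\T(w_{xx})^2=\int_\T(g_{xx})^2\ge16\pi^4\int_\T g^2$, and then expand $\int_\T(w^2-1)^2$; its leading term is $4\int_\T g^2$ (whence the constant $4\pi^4$), while the mean deviation $\bar w-1$ together with the cubic and quartic remainders are controlled through the nonlinear mass constraint $\int_\T w^4=1$ and the entropy bound, producing the correction factor $1+C\sqrt{\E(u_0)}$. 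The delicate points are the rigorous estimation of the mean via the mass constraint and the verification that the degradation of the constant is genuinely of order $\sqrt{\E(u_0)}$ and no worse; this is exactly where a global inequality fails and only the asymptotic version survives.

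Finally I would combine the two estimates. Since $\int_\T(w^2-1)^2=\frac{1}{2}\E(u)$, the functional inequality yields $4\int_\T(w_{xx})^2\ge 2\lambda\,\E(u)$ with $\lambda=4\pi^4/(1+C\sqrt{\E(u_0)})$, and inserting this into the entropy production inequality gives the differential inequality $\frac{\dd}{\dd t}\E(u(t))\le-2\lambda\,\E(u(t))$. Gronwall's lemma then produces $\E(u(t))\le\E(u_0)\,e^{-2\lambda t}$, and the relations recalled at the outset close the argument,
\begin{equation*}
\|u(t)-u_\infty\|_{L^1(\T)}\le 2\H(u(t),u_\infty)=\sqrt{\E(u(t))}\le\sqrt{\E(u_0)}\,e^{-\lambda t}.
\end{equation*}
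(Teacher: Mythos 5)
Your overall strategy is the same as the paper's: combine the entropy production inequality with the asymptotic Beckner inequality of \cite{CDGJ06} to get exponential decay of $\E$, then transfer to $L^1$ via $\|u-u_\infty\|_{L^1(\T)}\le 2\H(u,u_\infty)=\sqrt{\E(u)}$. Your functional inequality for $w=u^{1/4}$ is exactly the paper's inequality (\ref{ineq:aBeck}) specialized to $p=1/4$, $q=2$, $\eps_0=\E(u_0)$ (note $\Sigma_{1/4,2}(u^{1/4})=\E(u)$ and $32p^2\pi^4=2\pi^4$), so it can simply be cited rather than re-proved; your sketch of its mechanism is consistent with \cite{CDGJ06} but is not needed. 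All your constants match the theorem.

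There is, however, a genuine gap in where you run the Gronwall argument. You assert, for the weak solution itself, the entropy production inequality between \emph{arbitrary} times $0\le s\le t$, ``justified by weak lower semicontinuity,'' and later upgrade it to the differential inequality $\frac{\dd}{\dd t}\E(u(t))\le -2\lambda\,\E(u(t))$. Neither step is available. Lower semicontinuity controls the left-hand side $\E(u(t))$ from below along the approximating sequence, but on the right-hand side you would need $\limsup_{\tau}\E(u_\tau(s))\le \E(u(s))$, i.e.\ \emph{upper} semicontinuity of $\E$ at time $s$, which weak convergence does not provide; accordingly, the construction only yields the inequality with $s=0$, whence $\sup_{t}\E(u(t))\le\E(u_0)$ and nothing more. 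Moreover, $t\mapsto\E(u(t))$ is not known to be absolutely continuous for the weak solution, so the differential form cannot be invoked, and the $s=0$ integrated inequality alone, $\E(u(t))+2\lambda\int_0^t\E(u(\sigma))\,\dd\sigma\le\E(u_0)$, does not imply pointwise exponential decay without the intermediate-time inequalities you are missing. The paper sidesteps all of this by running your argument at the semi-discrete level: inequality (\ref{ineq:depi}) holds exactly for every time step, applying (\ref{ineq:aBeck}) to each iterate (legitimate because $\E(u_\tau^k)\le\E(u_0)$ keeps the iterates in $\mathcal{X}_{\eps_0}^{1/4,2}$) gives $\E(u_\tau^k)(1+2\lambda\tau)\le\E(u_\tau^{k-1})$, hence $\E(u_\tau^k)\le(1+2\lambda\tau)^{-k}\E(u_0)$, and weak lower semicontinuity is then used only once, on the correct side, when passing $\tau\downarrow0$. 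Restructuring your proof this way (or else proving a.e.-in-time strong convergence $u_\tau(s)\to u(s)$ to salvage your inequality for a.e.\ $s$) closes the gap; the rest of your computation, constants included, is correct.
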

 
It is very important that numerical schemes preserve some important features of 
equations of mathematical physics, for example positivity, conservation of mass, 
dissipation of certain functionals etc. Such schemes are then expected to be more
reliable and robust to capture true behaviour of solutions, especially in 
the long run simulations. There are many such schemes in the literature devised
for the original DLSS equation \cite{JuPi01, CJT03, DMM10,BEJ14, MaMa16,MaOs17}. 
Here we discuss a discrete variational derivative (DVD)
scheme \cite{FuMa10}, which is a slight modification of the scheme proposed in \cite{BEJ14} for the original DLSS
equation. DVD schemes are finite difference type schemes which respect the variational
structure of equations. For smooth positive solutions, equation
(\ref{1.eq:eDLSS}) can be rewritten in an equivalent (variational) form 
\begin{equation}\label{1.eq:gf_form}
\pa_t u = -\left(u\left(\frac{(\sqrt{u})_{xx}}{\sqrt{u}}\right)_x\right)_x 
+ \delta\sqrt{u}(\sqrt{u})_{xxx}\,,
\end{equation} 
which can be further written as 
\begin{equation}\label{1.eq:gf_form2}
\pa_t u = \left(u\left(\F'(u)\right)_x\right)_x - \delta\sqrt{u}\left(\sqrt{u}\F'(u)\right)_x\,,
\end{equation}
where $\F'(u) = -(\sqrt{u})_{xx}/\sqrt{u}$ denotes the variational derivative of the
Fisher information. Form (\ref{1.eq:gf_form2}) of the equation obviously gives the dissipation
of the Fisher information, and this is precisely the form of the $L^2$-Wasserstein gradient flow
being justified in \cite{GST09}. The main idea of DVD schemes is to construct a discrete
analogue of (\ref{1.eq:gf_form2}), which will ensure the dissipation of the discrete version
of the Fisher information on the discrete level.

However, we will not approximate directly (\ref{1.eq:gf_form2}), instead we approximate
\begin{equation}\label{1.eq:gf_form3}
\pa_t \sqrt{u} = \frac{1}{2\sqrt{u}}\left(u\left(\F'(u)\right)_x\right)_x 
- \frac{\delta}{2}\left(\sqrt{u}\F'(u)\right)_x\,.
\end{equation}
Advantage of using this form has been already addressed in \cite{BLS94} and \cite{Fis13} for $\delta=0$.
The main cause lies in the monotonicity of the operator
\begin{equation*}
\A(v) = \frac{1}{v}\left(v^2\left(\frac{v_{xx}}{v} \right)_x\right)_x\,,
\end{equation*}
which in our case will be the key ingredient for establishing the error estimates for the numerical scheme. 

Let $\T_N = \{x_i\ : \ i=0,\ldots,N,\ x_0\cong x_N\}$ denotes an
equidistant grid of mesh size $h$ on the one dimensional torus $\T
\cong [0,1)$ and let the vector $U^k\in \R^N$ with components $U_i^k$, $i=0,\ldots,N-1$,
$k\geq0$, approximates solution $u(t_k,x_i)$ at point $x_i\in\T_N$ and time $t_k = k\tau$, where
$\tau>0$ denotes the time step. 
Given $U^0\in\R^N_+$ the DVD scheme for equation (\ref{1.eq:gf_form3}) is defined 
by the following nonlinear system with unknowns
$V^{k+1}_i = \sqrt{U^{k+1}_i}$:
\begin{align}\label{1.sh.dvdm}
\frac{1}{\tau}(V_i^{k+1} - V_i^k) &= \frac{1}{2W^{k+1/2}_i}\delta_i^+\left(W^{k+1/2}_iW^{k+1/2}_{i-1}\delta_i^-
\left(\delta \Fd(W^{k+1/2})_i\right)\right) \\\nonumber
&\qquad - 
\frac{\delta}{2} \delta_i^{\langle1\rangle}\left(W^{k+1/2}_i\delta \Fd(W^{k+1/2})_i\right)\,,
\end{align}
for all $i=0,\ldots,N-1\,,\ k\geq 0\,,$ where $W^{k+1/2} = (V^{k+1} + V^k)/2$ and 
$\delta \Fd(W)_i = -\delta_i^{\langle2\rangle}W/W_i$ denotes the discrete variational derivative 
of the discrete Fisher information $\Fd$ defined by (\ref{def:discreteFisher}). Above $\delta_i^\pm$,
$\delta_i^{\langle1\rangle}$ and $\delta_i^{\langle2\rangle}$ denote finite difference operators 
precisely introduced in section \ref{sec:41}. 
Note that DVD scheme (\ref{1.sh.dvdm}) imitates equation (\ref{1.eq:gf_form3}) on the discrete level, 
and particular combination of discrete operators is justified by the following result.
\begin{theorem} \label{tm:dvds}
Let $N\in \N$, $\tau>0$ and $U^0\in \R^N_+$ be some
nonnegative initial datum satisfying $h\sum_{i=0}^{N-1}U_i^0 = 1$. The
scheme (\ref{1.sh.dvdm}) is consistent of order $O(\tau) + O(h^2)$ with respect to the 
time-space discretization, solutions $U^k$, $k\geq1$, are nonnegative by construction, 
satisfy $h\sum_{i=0}^{N-1}U_i^k = 1$, and the discrete
Fisher information is nonincreasing, i.e.~$\Fd(U^{k+1})\leq \Fd(U^k)$ for all $k\geq0$. 
Furthermore, there exists a constant $C>0$, independent of $\tau$ and $h$, such that
\begin{equation}\label{ineq:error_estimate}
h\sum_{i=0}^{N-1}\left(\sqrt{u^k_i} - \sqrt{U^k_i}\right)^2 \leq C\frac{\tau^2 + h^4}{1-\tau}
\,\quad\text{for all } k\geq 1\,,
\end{equation} 
where $u^k\in\R^N$ represents values of sufficiently smooth solutions to (\ref{1.eq:gf_form3}) 
at grid points $\T_N$ at time $t_k$.

\end{theorem}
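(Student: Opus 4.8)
The plan is to establish the five assertions in order of increasing difficulty, the error estimate (\ref{ineq:error_estimate}) being by far the hardest; throughout write $W:=W^{k+1/2}$ and exploit the algebraic simplification $W_i\,\delta\Fd(W)_i=-\delta_i^{\langle2\rangle}W$, which renders the third-order part of the scheme \emph{linear} in $W$ and is the structural reason the conservation and dissipation identities close exactly. Existence of $V^{k+1}$ solving the nonlinear system (\ref{1.sh.dvdm}) I would obtain by a Brouwer/degree argument for the implicit map, which for admissible $\tau$ is an $O(\tau)$ perturbation of the identity; nonnegativity of $U^{k+1}=(V^{k+1})^2$ is then automatic. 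For mass, multiply (\ref{1.sh.dvdm}) by $2hW_i$ and sum: since $2W_i(V_i^{k+1}-V_i^k)=U_i^{k+1}-U_i^k$, the left side is $\tfrac h\tau\sum_i(U_i^{k+1}-U_i^k)$, the second-order term on the right is a forward difference $\delta_i^+(\cdots)$ and telescopes to $0$, and the third-order term reduces after summation by parts to a multiple of $\sum_i(\delta_i^{\langle1\rangle}W)(\delta_i^{\langle2\rangle}W)$, the discrete analogue of $\int_\T W_xW_{xx}\,\dd x=0$, which vanishes by telescoping.

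\emph{Fisher dissipation and consistency.} The DVD construction supplies the exact discrete product rule $\Fd(U^{k+1})-\Fd(U^k)=h\sum_i\delta\Fd(W)_i\,(U_i^{k+1}-U_i^k)$. Inserting $U_i^{k+1}-U_i^k=\tau\,\delta_i^+(W_iW_{i-1}\delta_i^-\delta\Fd(W))-\tau\delta\,W_i\delta_i^{\langle1\rangle}(W_i\delta\Fd(W))$ and summing by parts, the second-order part gives $-\tau h\sum_iW_iW_{i-1}(\delta_i^-\delta\Fd(W))^2\le0$ (here $W\ge0$ is used), while the third-order part, with $P_i:=W_i\delta\Fd(W)_i=-\delta_i^{\langle2\rangle}W$, becomes $-\tau\delta\,h\sum_iP_i\,\delta_i^{\langle1\rangle}P_i=0$; thus $\Fd(U^{k+1})\le\Fd(U^k)$ and, as anticipated in Section~\ref{sec:dp}, the Hamiltonian third-order term conserves $\Fd$. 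Consistency is a routine Taylor expansion of the centered time difference, of the midpoint value $W$, and of the second-order-accurate operators $\delta_i^\pm,\delta_i^{\langle1\rangle},\delta_i^{\langle2\rangle}$, giving order $O(\tau)+O(h^2)$; squaring and summing with $hN=1$ produces the $O(\tau^2)+O(h^4)$ truncation seen on the right of (\ref{ineq:error_estimate}).

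\emph{Error equation.} Let $v_i^k=\sqrt{u_i^k}$ be the exact nodal values, $\eta^k:=v^k-V^k$ with $\eta^0=0$, and $\bar w:=(v^{k+1}+v^k)/2$, so $\eta^{k+1/2}:=\bar w-W=(\eta^{k+1}+\eta^k)/2$. By consistency $v$ solves (\ref{1.sh.dvdm}) up to a remainder $\rho^k$ with $h\sum_i(\rho_i^k)^2\le C(\tau^2+h^4)$. Subtracting the scheme for $V$ from that for $v$ and testing with $h\eta_i^{k+1/2}$ makes the left side telescope exactly to $\tfrac1{2\tau}(\|\eta^{k+1}\|^2-\|\eta^k\|^2)$ in the discrete $L^2$ norm. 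The third-order contribution is again harmless: by linearity it equals $\tfrac\delta2 h\sum_i(\delta_i^{\langle1\rangle}\delta_i^{\langle2\rangle}\eta^{k+1/2})\,\eta_i^{k+1/2}$, which vanishes \emph{identically} by the same $\sum_i(\delta_i^{\langle1\rangle}f)(\delta_i^{\langle2\rangle}f)=0$ identity, so no smallness assumption on $\delta$ is needed.

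\emph{The main obstacle} is the nonlinear second-order term, the discrete counterpart of $\int_\T(\A(\bar w)-\A(W))(\bar w-W)\,\dd x$, in which the distinct weights $1/\bar w_i$ and $1/W_i$ obstruct a naive summation by parts. Establishing its coercive monotonicity — the discrete analogue of the monotonicity of $\A$ exploited in \cite{BLS94,Fis13} — is the heart of the proof. I would mirror the continuous computation: separate the weights, apply summation by parts together with the strict positivity of $\bar w$ and $W$ (valid for smooth solutions and propagated on the discrete level), and use discrete Gagliardo--Nirenberg and Poincar\'e inequalities to dominate the non-sign-definite remainders by the leading discrete $H^2$-type dissipation plus a controllable multiple of $\|\eta^{k+1/2}\|^2$. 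With this coercivity the estimate closes by discrete Gronwall: bounding the $\rho^k$-pairing by Young's inequality produces an implicit $\tau\|\eta^{k+1}\|^2$ term whose absorption to the left yields the factor $1-\tau$, the coercive dissipation absorbs the intermediate $\tau\|\eta^{j}\|^2$ contributions when the per-step inequalities are summed from $\eta^0=0$, and the surviving right-hand side $\tau\sum_j\|\rho^j\|^2\le C(\tau^2+h^4)$ gives the uniform-in-$k$ bound $C(\tau^2+h^4)/(1-\tau)$.
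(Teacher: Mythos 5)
Your treatment of nonnegativity, mass conservation, dissipation of $\Fd$, consistency, and the overall error-equation setup is sound and matches the paper, including the two key algebraic observations that $W_i\,\delta\Fd(W)_i=-\delta_i^{\langle2\rangle}W$ renders the third-order term linear and that $\sum_{i}(\delta_i^{\langle1\rangle}f)(\delta_i^{\langle2\rangle}f)=0$ under periodicity. The genuine gap is exactly where you place ``the heart of the proof'': the nonlinear second-order term. You propose only an approximate coercivity, mirroring the continuous computation and dominating ``non-sign-definite remainders'' by discrete Gagliardo--Nirenberg and Poincar\'e inequalities plus a controllable multiple of $\|\eta^{k+1/2}\|^2$, and you invoke strict positivity of the discrete solution ``propagated on the discrete level''. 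Neither ingredient is available: the scheme guarantees only nonnegativity of $U^k$ (there is no uniform-in-$h,\tau$ lower bound on $W^{k+1/2}$, which the constants in your functional inequalities would require), and the remainder estimate is never carried out, so the step on which the whole error bound hinges is asserted rather than proved. Your Gronwall closure also leans on this unproven coercivity to absorb the intermediate $\tau\|\eta^{j}\|^2$ terms, whereas the paper handles them by the discrete Gronwall lemma alone.

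What the paper does instead --- and what the DVD structure is designed to deliver --- is an \emph{exact} algebraic monotonicity identity, with no remainders at all. Writing the second-order part of the scheme through the operator
\begin{equation*}
\Ad(W)_i = \frac{1}{W_i}\,\delta_i^+\left(W_iW_{i-1}\,\delta_i^-\,\delta\Fd(W)_i\right)\,,
\end{equation*}
two summations by parts together with the discrete differentiation rules yield, for all $w,W\in\R^N_+$,
\begin{equation*}
\left(\Ad(w)-\Ad(W)\right)\cdot(w-W) \;=\; h\sum_{i=0}^{N-1} w_iW_i\left(\delta\Fd(w)-\delta\Fd(W)\right)_i^2 \;\geq\; 0\,,
\end{equation*}
an identity that needs no lower bound on $w,W$, no functional inequalities, and no absorption argument; it is the discrete analogue of the monotonicity of $\A$ shown in \cite{JuPi01}. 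With it, the nonlinear contribution to the tested error equation is simply dropped (it has a sign), Young's and Jensen's inequalities handle the truncation pairing, and summation plus the discrete Gronwall lemma give the bound with the factor $1-\tau$. So your architecture is correct, but to complete the proof you must replace your perturbative plan by this identity; its verification is a short computation with the same discrete product and summation-by-parts rules you already use elsewhere in your argument.
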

\begin{remark}
Inequality (\ref{ineq:error_estimate}) provides a quantitative error estimate and thus convergence
of the DVD scheme. Defining a discrete analogue of the Hellinger distance as
\begin{equation}\label{def.discreteHell}
\H_{\dd}(U,V)^2 = \frac{h}{2}\sum_{i=0}^{N-1}\left(\sqrt{U_i} 
- \sqrt{V_i}\right)^2\,,\quad\text{for }U,V\in\R^N_+\,,
\end{equation}
inequality (\ref{ineq:error_estimate}) can be interpreted as 
$\sup_{k\in\N}\H_{\dd}(u^k,U^k)\leq C(\tau+h^2)$ for some $C>0$.
\end{remark}

The paper is organized as follows. In Section \ref{sec:dp} we discuss some formal
dissipation pro\-per\-ties and the geometric structure of the equation. Section \ref{sec:weak}
is devoted to proofs of Theorems \ref{tm:exist} and \ref{tm:ltb}, while in the last section we 
introduce the numerical scheme and prove its pro\-per\-ties summarized in Theorem \ref{tm:dvds}.

\section{Formal dissipation properties and geometric structure}\label{sec:dp}

\subsection{Entropy production estimates}
Before we undertake a thorough analysis on the wellposedness, let us discuss 
some formal dissipation properties of equation (\ref{1.eq:eDLSS}), 
which will be in the heart of rigorous proofs.
For this purpose we assume the existence of smooth and strictly positive solutions to equation 
(\ref{1.eq:eDLSS}) 
and consider a parametrized family of functionals of the form
\begin{align}\label{2.def:entropy}
\E_\alpha(u) &= \frac{1}{\alpha(\alpha-1)}\int_{\T}(u^{\alpha} - \alpha u + \alpha - 1)\dd x\,,\quad \alpha\neq 0,1\,,\nonumber\\
\E_1(u) &= \int_{\T}(u\log u - u + 1)\dd x\,,\quad \alpha = 1\,,\\
\E_0(u) &= \int_\T (u - \log u)\dd x\,,\quad \alpha = 0\,.\nonumber
\end{align}
In particular, we are looking for those functionals satisfying the so called Lyapunov property, 
i.e.~$(\dd/\dd t)\,\E_\alpha(u(t)) \leq 0\,$ along solutions to 
(\ref{1.eq:eDLSS}) for all $t>0$. Although having the 
opposite sign, functionals (\ref{2.def:entropy}) are often named {\em entropies}
due to their connection to the Boltzmann-Shannon entropy $\entropy(u) = -\E_1(u)$ and 
Tsallis entropies $\mathcal T_{\alpha}(u) = -\alpha\E_\alpha(u)$. 
For smooth and positive solutions we can write equation (\ref{1.eq:eDLSS}) 
in an equivalent polynomial representation
\begin{equation*}
\pa_tu = \left(u P_\delta\left(\frac{u_x}{u},\frac{u_{xx}}{u},\frac{u_{xxx}}{u}\right)\right)_x\,,
\end{equation*}
where the polynomial $P_\delta$ is given by 
\begin{equation}
P_\delta(\xi_1,\xi_2,\xi_3) = -\frac12\xi_3 + \xi_1\xi_2-\frac12\xi_1^3 
+ \delta\left(\frac12\xi_2 - \frac38\xi_1^2\right)\,.
\end{equation}

Calculating the entropy production we find
\begin{align}\label{2.eq:ent_prod}
-\frac{\dd}{\dd t}\E_\alpha(u) = \int_{\T} u^{\alpha}\left(\frac{u_x}{u}\right)
P_\delta\left(\frac{u_x}{u},\frac{u_{xx}}{u},\frac{u_{xxx}}{u}\right)\dd x
=: \int_\T u^{\alpha}S_\delta\left(\frac{u_x}{u},\frac{u_{xx}}{u},\frac{u_{xxx}}{u}\right)\dd x\,,
\end{align}
with the polynomial $S_\delta$ given by
$$
S_\delta(\xi) = \xi_1P_\delta(\xi) = -\frac12\xi_1\xi_3 + \xi_1^2\xi_2-\frac12\xi_1^4 
+ \delta\left(\frac12\xi_1\xi_2 - \frac38\xi_1^3\right)\,.
$$
We are now looking for all $\alpha\in\R$ such that the
integral inequality $-(\dd/\dd t) \E_\alpha(u(t)) \geq 0$ holds.
In order to assert the integral inequality, we systematically use integration by parts formulae 
and transform integrands using their polynomial representation. 
Observe that that equation (\ref{1.eq:eDLSS}) itself, and thus polyinomial $S_\delta$ as well,
do not possess a homogeneity properties like those in \cite{JuMa06}. However, the method of
algorithimic construction of entropies proposed in \cite{JuMa06} can be adjusted to the equation
at hand. First we identify elementary integration by parts formulae, which
are represented by so called {\em shift polynomials} $T_i(\xi)$:
\begin{align*}
\int_\T\left(u^\alpha\left(\frac{u_x}{u}\right)^3\right)_x\dd x &= 
\int_\T u^\alpha T_1\left(\frac{u_x}{u}, \frac{u_{xx}}{u}, \frac{u_{xxx}}{u}, \frac{\pa_x^4u}{u}\right)\dd x\,, 
\quad T_1(\xi) = 3\xi_1^2\xi_2 + (\alpha-3)\xi_1^4\,,\\ 
\int_\T\left(u^\alpha\frac{u_x}{u}\frac{u_{xx}}{u}\right)_x\dd x &= 
\int_\T u^\alpha T_2\left(\frac{u_x}{u}, \frac{u_{xx}}{u}, \frac{u_{xxx}}{u}, \frac{\pa_x^4u}{u}\right)\dd x\,, 
\quad T_2(\xi) = \xi_2^2 + (\alpha-2)\xi_1^2\xi_2 + \xi_1\xi_3\,,\\
\int_\T\left(u^\alpha\frac{u_{xxx}}{u}\right)_x\dd x &= 
\int_\T u^\alpha T_3\left(\frac{u_x}{u}, \frac{u_{xx}}{u}, \frac{u_{xxx}}{u}, \frac{\pa_x^4u}{u}\right)\dd x\,, 
\quad T_3(\xi) = (\alpha-1)\xi_1\xi_3 + \xi_4\,,\\ 
\int_\T\left(u^\alpha \left(\frac{u_x}{u}\right)^2\right)_x\dd x &= 
\int_\T u^\alpha T_4\left(\frac{u_x}{u}, \frac{u_{xx}}{u}, \frac{u_{xxx}}{u}, \frac{\pa_x^4u}{u}\right)\dd x\,, 
\quad T_4(\xi) = (\alpha-2)\xi_1^3 + 2\xi_1\xi_2\,,\\
\int_\T\left(u^\alpha\frac{u_{xx}}{u}\right)_x\dd x &= 
\int_\T u^\alpha T_5\left(\frac{u_x}{u}, \frac{u_{xx}}{u}, \frac{u_{xxx}}{u}, \frac{\pa_x^4u}{u}\right)\dd x\,, 
\quad T_5(\xi) = (\alpha-1)\xi_1\xi_2 + \xi_3\,.
\end{align*}
All other integration by parts formulae can be obtained as linear combinations of these.

Observe that 
\begin{equation*}
\int_\T u^\alpha T_i\left(\frac{u_x}{u}, \frac{u_{xx}}{u}, 
\frac{u_{xxx}}{u}, \frac{\pa_x^4u}{u}\right)\dd x = 0\,,\qquad i=1,\ldots,5\,,
\end{equation*}
thus, adding an arbitrary linear combination of 
the above integrals to (\ref{2.eq:ent_prod}) does not change the value of the
entropy production, but only changes the integrand, i.e.~for any $c_1,\ldots, c_5\in\R$
\begin{align}\label{2.eq:ent_prod_transformed}
-\frac{\dd}{\dd t}\E_\alpha(u) =
 \int_\T u^{\alpha}\left(S_\delta  + \sum_{i=1}^5 c_iT_i\right)
 \left(\frac{u_x}{u},\frac{u_{xx}}{u},\frac{u_{xxx}}{u}, \frac{\pa_x^4u}{u}\right)\dd x\,.
\end{align}
Clearly, sufficient condition for nonnegativity of an integral is pointwise 
nonnegativity of the integrand function. This turns the integral inequality 
$-(\dd/\dd t) \E_\alpha(u(t)) \geq 0$ into the polynomial decision problem:
\begin{align*}
(\exists c_1,\ldots, c_5\in\R)\,,(\forall \xi\in\R^4)\,,\quad 
\left(S_\delta + \sum_{i=1}^5c_iT_i\right)(\xi)\geq0\,,
\end{align*}
which is according to the Tarski \cite{Tar51} always decidable (solvable).
After straightforward calculations the decision problem amounts to
\begin{align}
(\exists c_1, c_4\in\R)\,,(\forall \xi\in\R^4)\,,\quad 
\frac12\xi_2^2 & + \left(3c_1 + \frac{\alpha}{2}\right)\xi_1^2\xi_2 
+ \left((\alpha-3)c_1 - \frac12\right)\xi_1^4\\
&\quad + \left(2c_4 + \frac{\delta}{2}\right)\xi_1\xi_2 \nonumber
+ \left((\alpha-2)c_4 - \frac{3\delta}{8}\right)\xi_1^3\geq0\,,
\end{align}
whose solution can be resolved, for instance with \verb1Wolfram Mathematica1,
to
\begin{equation}\label{2.eq:sol_dp}
\delta = 0\ \text{\sc and}\ 0\leq \alpha \leq \frac32 \quad \text{\sc or}
\quad \delta > 0\ \text{\sc and}\ \alpha = \frac12\,.
\end{equation}
First part of the sentence (\ref{2.eq:sol_dp}), $\delta = 0\ \text{\sc and}\ 0\leq \alpha \leq 3/2$,
is the well known result for the original DLSS equation \cite{JuMa06}, while the second part,
$\delta > 0\ \text{\sc and}\ \alpha = 1/2$,
concerns our equation (\ref{1.eq:eDLSS}), and provides only 
\begin{equation*}
\E_{1/2}(u) = 2\int_\T (u - 2\sqrt{u} + 1)\dd x
\end{equation*} 
as an entropy (Lyapunov functional) for equation (\ref{1.eq:eDLSS}). 

By means of the same method \cite{JuMa06} as briefly presented above, 
the entropy production can be further estimated
from bellow by a positive nondegenerate functional as follows:
\begin{equation*}
-\frac{\dd}{\dd t}\E_{1/2}(u(t)) \geq 4\int_\T (u^{1/4})_{xx}^2\dd x\,\,,\quad t>0\,
\end{equation*}
along smooth positive solutions to equation (\ref{1.eq:eDLSS}). In this way we also 
proved the following key estimate for the construction of weak solutions (cf.~\cite{JuMa08}).
\begin{proposition}[Entropy production estimate] Let $u\in H^2(\T)$ be strictly positive,
then the following functional inequality holds 
\begin{equation}\label{2.ineq:epi}
-\int_\T u(u^{-1/2})_{xx}(\log u)_{xx}\geq 4\int_\T (u^{1/4})_{xx}^2\dd x\,.
\end{equation}
\end{proposition}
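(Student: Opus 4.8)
The plan is to reuse the algorithmic, integration-by-parts method of \cite{JuMa06} recalled above, now specialized to the single inequality (\ref{2.ineq:epi}) with $\alpha=1/2$. First I would pass to the polynomial representation with $\xi_1=u_x/u$ and $\xi_2=u_{xx}/u$. Direct differentiation gives
\begin{equation*}
u(u^{-1/2})_{xx}=u^{1/2}\Big(\tfrac34\xi_1^2-\tfrac12\xi_2\Big)\,,\qquad (\log u)_{xx}=\xi_2-\xi_1^2\,,
\end{equation*}
so the left-hand side of (\ref{2.ineq:epi}) becomes
\begin{equation*}
-\int_\T u(u^{-1/2})_{xx}(\log u)_{xx}\,\dd x=\int_\T u^{1/2}\Big(\tfrac12\xi_2^2-\tfrac54\xi_1^2\xi_2+\tfrac34\xi_1^4\Big)\dd x\,,
\end{equation*}
while from $(u^{1/4})_{xx}=\tfrac14 u^{1/4}(\xi_2-\tfrac34\xi_1^2)$ one obtains
\begin{equation*}
4\int_\T (u^{1/4})_{xx}^2\,\dd x=\tfrac14\int_\T u^{1/2}\Big(\xi_2^2-\tfrac32\xi_1^2\xi_2+\tfrac{9}{16}\xi_1^4\Big)\dd x\,.
\end{equation*}

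Subtracting, the assertion is equivalent to $\int_\T u^{1/2}R(\xi_1,\xi_2)\,\dd x\geq 0$ with $R(\xi)=\tfrac14\xi_2^2-\tfrac78\xi_1^2\xi_2+\tfrac{39}{64}\xi_1^4$. The main obstacle is that $R$ is \emph{not} pointwise nonnegative: viewed as a quadratic form in $(\xi_2,\xi_1^2)$ its discriminant equals $\tfrac{10}{64}\xi_1^4>0$, so pointwise nonnegativity of the integrand fails and cannot be invoked directly. This is exactly where the shift polynomials enter. Since $R$ contains no $\xi_3$, I would use only the identity attached to $T_1$, namely
\begin{equation*}
\int_\T u^{1/2}\,T_1(\xi)\,\dd x=\int_\T\Big(u^{1/2}(u_x/u)^3\Big)_x\,\dd x=0\,,\qquad T_1(\xi)=3\xi_1^2\xi_2-\tfrac52\xi_1^4\,,
\end{equation*}
which is the $\alpha=1/2$ specialization of the shift polynomial $T_1$ from Section~\ref{sec:dp}.

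Adding $c_1T_1$ to $R$ does not change the integral, and I would pick $c_1$ to render $R+c_1T_1$ pointwise nonnegative. Taking $c_1=\tfrac18$ gives, after completing the square,
\begin{equation*}
R(\xi)+\tfrac18 T_1(\xi)=\tfrac14\xi_2^2-\tfrac12\xi_1^2\xi_2+\tfrac{19}{64}\xi_1^4=\tfrac14(\xi_2-\xi_1^2)^2+\tfrac{3}{64}\xi_1^4\geq 0\,,
\end{equation*}
a manifest sum of squares. Hence $\int_\T u^{1/2}R\,\dd x=\int_\T u^{1/2}\big(R+\tfrac18T_1\big)\,\dd x\geq 0$, which is precisely (\ref{2.ineq:epi}). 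Any $c_1\in[(11-\sqrt{31})/72,(11+\sqrt{31})/72]$ works, and $c_1=\tfrac18$ is a convenient interior value.

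Finally I would record that every manipulation is legitimate for strictly positive $u\in H^2(\T)$. In one dimension $H^2(\T)\hookrightarrow C^1(\T)$, so $u$ and $u_x$ are continuous and $u$ is bounded below by a positive constant; each integrand above (for instance $u^{-3/2}u_{xx}^2$, $u^{-5/2}u_x^2u_{xx}$ and $u^{-7/2}u_x^4$) then lies in $L^1(\T)$, while $u^{1/2}(u_x/u)^3=u^{-5/2}u_x^3\in H^1(\T)$, so its derivative integrates to zero by periodicity and the $T_1$ identity holds rigorously. The only genuine difficulty is thus the algebraic one resolved above: turning the indefinite polynomial $R$ into a sum of squares by a single integration by parts.
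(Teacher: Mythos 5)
Your proof is correct, and it is essentially the paper's own argument: the Proposition is obtained there by the algorithmic integration-by-parts method of J\"ungel--Matthes, which is exactly what you carry out, passing to the polynomial representation in $\xi_1=u_x/u$, $\xi_2=u_{xx}/u$, adding the shift polynomial $T_1$ with $\alpha=1/2$, and exhibiting a pointwise sum-of-squares. Your computations check out in every detail (including $R=\tfrac14\xi_2^2-\tfrac78\xi_1^2\xi_2+\tfrac{39}{64}\xi_1^4$, the decomposition $R+\tfrac18T_1=\tfrac14(\xi_2-\xi_1^2)^2+\tfrac{3}{64}\xi_1^4$, the admissible interval $[(11-\sqrt{31})/72,(11+\sqrt{31})/72]$ for $c_1$, and the justification of the $T_1$ identity for strictly positive $u\in H^2(\T)$), so you have in fact supplied the explicit details that the paper only sketches.
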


Although the above dissipation results for equation (\ref{1.eq:eDLSS}) seem to be a poor
in comparison with the dissipation structure of the original DLSS equation,
it turns out that estimate (\ref{2.ineq:epi}) is sufficient for 
the  construction of global weak sloutions, which
we perform in a subsequent section.

\subsection{Geometric structure}
Besides the entropy $\E_{1/2}$, there is another distinguished Lyapunov 
functional for the dynamics of (\ref{1.eq:eDLSS}), the {\em Fisher information}, which
is defined by
\begin{equation}\label{2.def:Fi}
\F(u) =  \int_{\T} \left(\sqrt{u}\right)_x^2 \dd x\,.
\end{equation}
The Lyapunov property of the Fisher information is directly seen from the 
following equivalent (for smooth positive solutions) formulation of equation (\ref{1.eq:eDLSS})
\begin{equation}\label{2.eq:gf_form}
\pa_t u = -\left(u\left(\frac{(\sqrt{u})_{xx}}{\sqrt{u}}\right)_x\right)_x 
+ \delta\sqrt{u}(\sqrt{u})_{xxx}\,,
\end{equation} 
which can be further written as 
\begin{equation}\label{2.eq:gf_form2}
\pa_t u = \left(u\left(\F'(u)\right)_x\right)_x - \delta\sqrt{u}\left(\sqrt{u}\F'(u)\right)_x\,,
\end{equation}
where $\F'(u) = -(\sqrt{u})_{xx}/\sqrt{u}$ denotes the variational derivative of the
Fisher information.

The first term on the right hand side in (\ref{2.eq:gf_form2}) has the well known structure
of the gradient flow with respect to the $L^2$-Wasserstein metric. This structure has been
rigorously justified and exploited for the original DLSS equation posed on the whole space \cite{GST09}.
It has been shown that the DLSS equation constitutes the gradient flow of the 
Fisher information with respect to the $L^2$-Wassersten metric.

We find it a remarkable fact that the second term on the right-hand side in (\ref{2.eq:gf_form2}) 
formally possesses the structure of a 
{\em Hamiltonian flow} of the Fisher information, which we discuss more in detail bellow.
Hence, equation (\ref{2.eq:gf_form}) can be formally written as a mixture flow, i.e.~the sum of 
the gradient and the Hamiltonian flow
\begin{equation}\label{eq:gfH}
\pa_t u = -\nabla_{W_2}\F(u) + X_\F(u)\,.
\end{equation}

\subsubsection{Symplectic structure of the third-order term} Let us briefly discuss 
the structure of the third-order term in (\ref{2.eq:gf_form}), i.e.~we only consider equation
\begin{equation}\label{eq:dispersion}
\pa_t u = \sqrt{u}(\sqrt{u})_{xxx}\,.
\end{equation}

First of all, direct formal calculations reveal that all functionals
\begin{equation*}
\F_n(u) =  \int_\T \left(\pa_x^n\sqrt{u}\right)^2 \dd x\,,\quad n\in\N_0\,,
\end{equation*}
are constants of motion (first integrals) for equation (\ref{eq:dispersion}).
Namely,
\begin{align*}
\frac{\dd}{\dd t}\F_n(u(t)) 
& = (-1)^{n\bmod 2}\int_\T \pa_x^{2n}\sqrt{u}\pa_x^3\sqrt u\,\dd x 
 = -\frac{1}{2}\int_\T \pa_x\left(\pa_x^{n+1}\sqrt u\right)^2\dd x = 0\,.
\end{align*}
Note that for the full equation (\ref{2.eq:gf_form}), 
only $\F_0$ (mass) is conserved and $\F_1$ (Fisher information) is dissipated, while
for all other $n\geq2$ the Lyapunov property of functionals $\F_n$ is an open question.
Using the method of systematic integration 
by parts like above, production of the Fisher information can also be bounded from bellow
as follows
\begin{align}
-\frac{\dd}{\dd t}\F(u(t)) 
\geq \kappa\int_\T \left((\sqrt{u})_{xxx}^2 + (\sqrt[6]{u})_x^6\right)\dd x\,, 
\label{ineq:epF1}
\end{align}
for some $\kappa > 0$, which can be explicitely determined.

Following \cite{AGS08} let $\M$ denotes the set of smooth positive densities on $\T$ 
(Radon derivatives w.r.t.~the Lebesque measure).
For every $u\in \M$, let
\begin{equation}\label{def:tang_space}
T_u\M := \Cl_{L^2(u\dd x)}\{\pa_x\phi\ |\ \phi\in C^\infty(\T)\}\,
\end{equation}
denotes the tanget space at $u$, and by $T\M$ we denote the tangent bundle. There is a natural
orthogonal decomposition of $L^2(\T,u\dx)$ according to
\begin{equation*}
L^2(\T,u\dd x) = T_u\M \oplus [T_u\M]^\perp\,,
\end{equation*}
where $[T_u\M]^\perp = \{v\in L^2(\T,u\dx)\ :\ \pa_x(uv) = 0\}$, and let 
$\pi_u : L^2(\T, u\dd x) \to T_u\M$
denotes the orthogonal projection.
For fixed $u\in \M$ we define operator $\J_u : C^\infty(\T) \to (C^\infty(\T))^*$
by
\begin{equation}\label{def:J}
\langle\J_u\phi,\psi\rangle := -\int_\T \sqrt{u}\pa_x(\sqrt{u}\phi)\psi\,\dd x\,,\quad
\forall\psi\in C^\infty(\T)\,.
\end{equation}
If $u$ is positive and smooth enough, $\J_u\phi$ is given by its $L^2$-representative 
\begin{equation*}
\J_u\phi = -\sqrt u\pa_x(\sqrt{u}\phi)\in L^2(\T,u\dd x)\,.
\end{equation*}
In such a case we define the subbundle $\hat{T}\M$ according to
\begin{equation*}
\hat T_u\M := \{\pi_u(\J_u\phi)\ : \ \phi\in C^\infty(\T)\}\,,\quad u\in\M\,,
\end{equation*}
and on that bundle we define differential 2-form $\omega_u : \hat T_u\M\times \hat T_u\M\to \R$, by
\begin{equation}\label{def:diform}
\omega_u(\xi_1,\xi_2)	
:= \langle\J_u\phi_1,\phi_2\rangle = -\int_\T \sqrt{u}\phi_2\pa_x(\sqrt{u}\phi_1)\dd x\,,
\end{equation}
where $\xi_i = \pi_u(\J_u\phi_i)$ for $i=1,2$. 

Observe that for every $u\in\M$, 2-form $\omega_u$ is bilinear and skew-symmetric. 
Also, for every $u\in\M$ and $0\neq\xi = \pi_u(\J_u\phi)\in \hat T_u\M$, choosing 
$\eta_\xi = \pi_u(\phi)\neq0$, it follows
\begin{equation*}
\omega_u(\eta_\xi,\xi) 
= \|\phi\|_{L^2}^2 \neq 0\,,
\end{equation*}
which shows that $\omega_u$ is nondegenerate.
In order to prove that $\omega_u$ is symplectic, it remains to check that it is exact,
i.e., its external derivative equals zero. 
The following formula holds,
\begin{align*}
\dd \omega_u[\xi_0,\xi_1,\xi_2] &= \D_u\omega_u(\xi_1,\xi_2)[\xi_0] - \D_u\omega_u(\xi_0,\xi_2)[\xi_1]
+ \D_u\omega_u(\xi_0,\xi_1)[\xi_2]\\
& \quad - \omega_u([\xi_0,\xi_1]_u,\xi_2) + \omega_u([\xi_0,\xi_2]_u,\xi_1) - 
\omega_u([\xi_1,\xi_2]_u,\xi_0)\,,
\end{align*}
where $\D_u\omega_u(\xi_1,\xi_2)[\xi_0]$ denotes the differential of $\omega$ with respect to $u$
at point $(u;\xi_1,\xi_2)$ in the direction of $\xi_0$, and $[\cdot\,,\cdot]_u$ denotes the Poisson
bracket of vector fields at point $u$, defined by
\begin{align}
[\xi_0,\xi_1]_u &:= \phi_1\xi_0 - \phi_0\xi_1 \label{def:vectPoisson} 
=  \phi_1\pi_u(\J_u\phi_0) - \phi_0\pi_u(\J_u\phi_1)\,.
\end{align}
Directly from the definition \eqref{def:diform} we calculate   
\begin{align*}
\D_u\omega_u(\xi_1,\xi_2)[\xi_0] &:=  \frac{\dd}{\dd s}\omega_{u+s\xi_0}(\xi_1,\xi_2)\Big|_{s=0}\\
&= -\frac12\int_\T\frac{1}{\sqrt u}\left(\phi_2\pa_x(\sqrt{u}\phi_1) - 
 \phi_1\pa_x(\sqrt{u}\phi_2)\right)\xi_0\,\dd x\,,
\end{align*}
and analogously other two expressions. Also, by the definition
\begin{equation*}
\omega_u([\xi_0,\xi_1]_u,\xi_2) = -\int_\T \sqrt{u}\left(\phi_1\pa_x(\sqrt{u}\phi_0) 
 - \phi_0\pa_x(\sqrt{u}\phi_1)\right)\phi_2\,\dd x\,.
\end{equation*}
Then straightforward calculations yield $\dd \omega_u[\xi_0,\xi_1,\xi_2] = 0$.

Let $\entropy:\M\to\R$ be a Hamiltonian (for instance the Fisher information $\F$),
the corresponding Hamiltonian vector field $X_\Ham$ is defined
through the identity 
\begin{equation*}
\dd \Ham_u(\xi) = \omega_u(\xi,X_\Ham(u)) = -\int_\T \sqrt{u}\phi_\Ham\pa_x(\sqrt u\phi)\dd x
\end{equation*}
for all $\xi = \pi_u(\J_u\phi)\,,\ \phi\in C^\infty(\T)$ and $X_\Ham = \J_u\phi_\Ham$.
On the other hand 
\begin{align*}
\dd \Ham_u(\xi) = \int_\T\Ham'(u)\xi\,\dd x
= - \int_\T \Ham'(u)\sqrt{u}\pa_x(\sqrt{u}\phi)\dd x 
= \int_\R \sqrt{u}\pa_x\left(\sqrt{u}\Ham'(u)\right)\phi\,\dd x\,,
\end{align*}
which shows that
\begin{equation*}
X_\Ham(u) = \J_u \Ham'(u)\,.
\end{equation*}


\section{Well-posedness and long time behavior of nonnegative weak solutions}
\label{sec:weak}
\subsection{Existence of weak solutions --- proof of Theorem \ref{tm:exist}}
Construction of the weak solution is divided into three main steps: analysis of the time discrete
problem, passage to the limit $\tau\downarrow0$ with the time step $\tau$ and discussion on uniqueness. 
\subsubsection{Time discrete equation}\label{sec:semidiscrete} Let $\tau>0$ be given time step. We discretize equation 
(\ref{1.eq:eDLSS}) in time by means of the implicit Euler scheme.
The semi-discrete equation then reads 
\begin{equation}\label{3.eq:semi_discrete}
\frac{1}{\tau}(u - u_0) 
= - \frac12\left(u\left(\log u\right)_{xx}\right)_{xx} + 
	2\delta\left(u^{3/4}(u^{1/4})_{xx}\right)_x\quad\text{on }\T\,,
\end{equation}
where $u_0\geq 0$ a.e.~is given. Our aim is to solve nonlinear equation (\ref{3.eq:semi_discrete}) by means
of the fixed point method. For this purpose we divide our procedure into several steps.
First we linearize and regularize equation (\ref{3.eq:semi_discrete}).
Linearization is performed by the change of variables $y = \log u$, while we regularize it by adding
an elliptic operator $- \eps(\pa_x^6y - y) - \eps((\log u)_{x}^4y_x)_x$,
where $\eps>0$ is a small parameter. For given strictly positive function $u = e^z$ and 
$\sigma\in[0,1]$
we then relax the above equation (\ref{3.eq:semi_discrete}) 
into a linear elliptic equation in terms of $y$:
\begin{equation}\label{3.eq:fixpoint}
\frac{\sigma}{\tau}(e^z - u_0) 
= - \frac12\left(e^z\, y_{xx}\right)_{xx} + 2\delta\sigma\left(e^z\left(\frac{z_{xx}}{4} + \frac{z_x^4}{16} \right) \right)_x
+ \eps\left(\pa_x^6y + \left(z_{x}^4\,y_{x}\right)_x  - y \right) \quad\text{on }\T\,.
\end{equation}
More precisely, for fixed $\eps > 0$ we have formulated the fixed point mapping
$S_\eps : H^2(\T)\times[0,1]\to H^2(\T)$ defined by $S(z,\sigma) = y$, where $y\in H^3(\T)$
is the unique solution to the elliptic problem (\ref{3.eq:fixpoint}). 

The existence and uniqueness
of $y$ follows directly from the Lax-Milgram lemma. By standard arguments we also assert continuity
and compactness of the operator $S_\eps$. Observe that $S_\eps(z,0)=0$ for every $z\in H^2(\T)$,
while fixed points of $S_\eps(\cdot,1)$ will be solutions to the regularization of equation 
(\ref{3.eq:semi_discrete}).
In order to apply the Leray-Schauder fixed point theorem and conclude the existence of solutions
we need a uniform (in $\sigma$) estimate on the set of fixed points of $S_\eps(\cdot,\sigma)$ for
all $\sigma\in[0,1)$.
Let $y\in H^3(\T)$ be such a fixed point.
Employing the test function $\phi = 2 - 2e^{-y/2}\in H^3(\T)$ in the weak formulation of (\ref{3.eq:fixpoint})
and using the estimate (\ref{2.ineq:epi}) we find
\begin{align}\label{3.ineq:sigma}
\frac{\sigma}{\tau}\E(e^y) + 4\int_\T \left(e^{y/4}\right)_{xx}^2\dd x
+ \eps\kappa\int_\T e^{-y/2}\left(y_{xxx}^2  + y_x^6 \right)\dd x
\leq \frac{\sigma}{\tau}\E(u_0)\,,
\end{align}
where $\kappa>0$ is some positive constant. Here we used the pointwise inequality
$a(1-e^{-a/2}) \geq 0$ for all $a\in\R$. At this point it also becomes apparent why 
the regularization part contains the term $\eps\left(z_{x}^4\,y_{x}\right)_x$. Namely,
the linear regularization solely, would destroy the dissipation structure of the original equation,
while adding this nonlinear term ensures the above uniform estimate.
Estimate (\ref{3.ineq:sigma}) also implies $\|e^{y/2}-1\|_{L^2(\T)}\leq C$, which provides 
$\|e^{y/4}\|_{L^2(\T)}\leq C$, where $C$ is independent of $\sigma$. 
The latter conclusion together with (\ref{3.ineq:sigma}) implies in further 
$\|e^{y/4}\|_{H^2(\T)}\leq C$, while the continuity of the Sobolev embedding 
$H^2(\T)\hookrightarrow L^\infty(\T)$ asserts $\|e^{y/4}\|_{L^\infty(\T)}\leq C$.
Combining this again with (\ref{3.ineq:sigma}) gives $\sqrt{\eps}\|y_{xxx}\|_{L^2(\T)} \leq C$,
while using the test function $\phi = 1$ in (\ref{3.eq:fixpoint}) gives 
 $\eps|\int_\T y\,\dd x |\leq C$. The last two statements with help of the Poincar\'e inequality
 eventually provides the uniform ($\sigma$-independent) estimate $\|y\|_{H^3(\T)}\leq C$, 
 i.e.~$\|y\|_{H^2(\T)}\leq C$.
 The last constant $C$ depends on $\eps$, but this is not an issue here.
 With this uniform estimate we conclude the existence of a fixed point $y_\eps$ of the mapping
 $S_\eps(\cdot,1)$, and thus, the existence of a weak solution of 
 \begin{equation}\label{3.eq:regularized}
\frac{1}{\tau}(u_\eps - u_0) 
= - \frac12\left(u_\eps\, y_{\eps,xx}\right)_{xx} + 2\delta\left(u_\eps^{3/4}(u^{1/4}_\eps)_{xx}\right)_x
+ \eps\left(\pa_x^6y_\eps + \left((y_{\eps,x})^5\right)_x  - y_\eps \right) \quad\text{on }\T\,,
\end{equation}
where $u_\eps = e^{y_\eps}$, and therefore $u_\eps$ is strictly positive.

Our next step is deregularization of equation (\ref{3.eq:regularized}), i.e.~we consider the
limit of all terms in (\ref{3.eq:regularized}) as $\eps\downarrow0$. Again from estimate
(\ref{3.ineq:sigma}) (with $\sigma=1$) and above discussion we conclude 
\begin{equation}\label{3.ineq:eps}
\sqrt{\eps}\|y_{\eps,xxx}\|_{L^2(\T)} + \eps^{1/6}\|y_{\eps,x}\|_{L^6(\T)} + \sqrt{\eps}\|y_{\eps}\|_{L^2(\T)}\leq C\,,
\end{equation}
where $C>0$ is independent of $\eps$.
The latter $L^2$ estimate essentially follows from (\ref{3.eq:regularized}) utilizing the pointwise
inequality $2a(1-e^{-a/2}) \geq a^2 - e^{a/2} - 5$ for all $a\in\R$.
As we already discussed above, we have
\begin{equation}
\|u_\eps^{1/4}\|_{H^2(\T)} \leq C\,,
\end{equation}
which implies (up to a subsequence)
\begin{equation}
u_\eps^{1/4} \rightharpoonup u^{1/4}\quad\text{weakly in }H^2(\T)\,.
\end{equation}
Since $u_\eps$ is strictly positive and smooth enough, we can write
\begin{equation}
u_\eps\, y_{\eps,xx} =
4u^{1/2}_\eps\left(u^{1/4}_\eps(u^{1/4}_\eps)_{xx} - (u^{1/4}_\eps)_x^2\right)\,,
\end{equation}
and invoking compactness of Sobolev embeddings 
$H^2(\T)\hookrightarrow L^\infty(\T)$ and $H^2(\T)\hookrightarrow W^{1,4}(\T)$, we conclude:
\begin{align*}
u_\eps\, y_{\eps,xx} &\rightharpoonup 
4u^{1/2}\left(u^{1/4}(u^{1/4})_{xx} - (u^{1/4})_x^2\right)
\quad\text{weakly in }\, L^2(\T)\,,\\
\left(u_\eps^{3/4}(u^{1/4}_\eps)_{xx}\right)_x &\rightharpoonup
\left(u^{3/4}(u^{1/4})_{xx}\right)_x\quad\text{weakly in }\, L^2(\T)\,,\\
u_\eps &\to u \quad\text{strongly in }\, L^\infty(\T)\,.
\end{align*}
Moreover, uniform estimate (\ref{3.ineq:eps}) implies
\begin{equation*}
\eps\left(\pa_x^6y_\eps + \left((y_{\eps,x})^5\right)_x  - y_\eps \right) \to 0
\quad\text{strongly in }H^3(\T)\,,
\end{equation*}
and we finally conclude that
$u$ is a weak solution to
\begin{equation*}
\frac{1}{\tau} (u - u_0) = 
- 2\left(u^{1/2}\left(u^{1/4}(u^{1/4})_{xx} - (u^{1/4})_x^2\right)\right)_{xx} + 
2\delta\left(u^{3/4}(u^{1/4})_{xx}\right)_x\,\quad\text{on }\T\,.
\end{equation*}
Employing the test function $\phi=1$, it readily follows that $\int_\T u\,\dd x = \int_\T u_0\,\dd x = 1$. 
Standard arguments of weak lower semicontinuity of both entropy and the entropy production bound
provide the discrete entropy production inequality
\begin{equation}\label{ineq:depi}
\E(u) + 4\tau \int_\T (u^{1/4})_{xx}^2\dd x \leq \E(u_0)\,,
\end{equation}
which is essential for the next step of the procedure.

\subsubsection{Passage to the limit $\tau\downarrow0$}
Let the time horizon $T>0$ and the time step $\tau > 0$ be such that $T/\tau = N\in\N$. 
Then using recursively procedure from the previous step we construct solutions $u_\tau^k$
satisfying
\begin{equation}\label{3.eq:weak_k}
\frac{1}{\tau} (u_\tau^k - u_\tau^{k-1}) = 
- 2\left((u_\tau^k)^{1/2}\left((u_\tau^k)^{1/4}((u_\tau^k)^{1/4})_{xx} 
- ((u_\tau^k)^{1/4})_x^2\right)\right)_{xx} + 
2\delta\left((u_\tau^k)^{3/4}((u_\tau^k)^{1/4})_{xx}\right)_x\,
\end{equation}
and 
\begin{equation}\label{3.ineq:epi_k}
\E(u_\tau^k) + 4\tau \int_\T ((u_\tau^k)^{1/4})_{xx}^2\,\dd x \leq \E(u_\tau^{k-1})\,
\end{equation}
for $k=1,\ldots,N$.
Defining the step function $u_\tau$ according to 
\begin{align*}
u_\tau(0) &:= u_0\,,\\ 
u_\tau(t) &:= u_\tau^k\,,\quad(k-1)\tau < t \leq k\tau\,, \ k = 1,\ldots, N\,,
\end{align*}
the sequence of equations in (\ref{3.eq:weak_k}) sums up to 
\begin{align}\label{3.eq:weak_step}
\frac{1}{\tau}\int_0^T\!\!\!\int_\T(u_\tau  - \sigma_\tau u_\tau)\phi\,\dd x\dd t
&= -2 \int_0^T\!\!\!\int_\T\left(u_\tau^{1/2}\left(u_\tau^{1/4}(u_\tau^{1/4})_{xx} 
- (u_\tau^{1/4})_x^2\right)\right)\phi_{xx}\,\dd x\dd t\\	\nonumber
&\qquad - 2\delta\int_0^T\!\!\!\int_\T u_\tau^{3/4}(u_\tau^{1/4})_{xx}\phi_x\,\dd x\dd t\,,
\end{align}
for all test functions $\phi\in L^1(0,T;H^2(\T))$, while inequality (\ref{3.ineq:epi_k})
results in 
\begin{equation}\label{3.ineq:epi_tau}
\E(u_\tau^N) + 4\int_0^T\!\!\!\int_\T (u_\tau^{1/4})_{xx}^2\,\dd x\dd t \leq \E(u_0)\,.
\end{equation}
The last inequality directly implies uniform (in $\tau$) estimates
\begin{align*}
\|(u_\tau^{1/4})_{xx}\|_{L^2(0,T;L^2(\T))}&\leq C\,,\\
\|\sqrt{u_\tau} - 1\|_{L^\infty(0,T;L^2(\T))}&\leq C\,,
\end{align*}
which jointly imply $\|u_\tau^{1/4}\|_{L^2(0,T;H^2(\T))} \leq C$ and therefore we have
(up to a subsequence) the weak convergence of the sequence $(u_\tau^{1/4})$ 
to some $v\in L^2(0,T;H^2(\T))$, i.e.
\begin{equation}\label{3.eq:v}
u_\tau^{1/4} \rightharpoonup v\quad \text{weakly in }L^2(0,T;H^2(\T))\,.
\end{equation}
Estimate $\|\sqrt{u_\tau}\|_{L^\infty(0,T;L^2(\T))}\leq C$ implies
$\|u_\tau^{1/4}\|_{L^\infty(0,T;L^4(\T))}\leq C$. Furthermore, the Gagliardo-Nirenberg
inequality provides
\begin{equation*}
\|u_\tau^{1/4}(t)\|_{L^\infty(\T)} 
\leq C\|u_\tau^{1/4}(t)\|_{H^2(\T)}^{1/7}\, \|u_\tau^{1/4}(t)\|_{L^4(\T)}^{6/7}
\end{equation*}
for a.e.~$t\in(0,T)$. Therefore, the Young inequality gives the uniform bound
\begin{equation}
\|u_\tau^{1/4}\|_{L^7(0,T;L^\infty(\T))} \leq C\,.
\end{equation}
The above obtained estimates are now sufficient to conclude
the uniform a priori estimate on the sequence of finite differences
\begin{equation}\label{3.ineq:dt_u_tau}
\tau^{-1}\|u_\tau - \sigma_\tau u_\tau\|_{L^{1}(\tau,T;H^{-2}(\T))} 
\leq C\,,
\end{equation}
where $\sigma_\tau u_\tau = u_\tau(\cdot - \tau)$ denotes the left shift operator in time.

Next we want to prove
\begin{equation}\label{3.ineq:u_tau}
\|u_\tau\|_{L^2(0,T;W^{2,1}(\T))} \leq C\,.
\end{equation}
Notice that 
\begin{equation*}
(u_\tau^{1/2})_{x} = 2u_\tau^{1/4}(u_\tau^{1/4})_x\,,\quad (u_\tau^{1/2})_{xx} = 2\left(u_\tau^{1/4}(u_\tau^{1/4})_{xx} + (u_\tau^{1/4})_x^2 \right)\,,
\end{equation*} 
which, using the previous estimates, implies $\|u_\tau^{1/2}(t)\|_{H^2(\T)}\leq C$ for
a.e.~$t\in(0,T)$. Employing the Lions-Villani theorem provides
$\|u_\tau^{1/4}(t)\|_{W^{1,4}(\T)}^2\leq C_{LV}\|u_\tau^{1/2}(t)\|_{H^2(\T)}\leq C$ 
(cf.~\cite[Lemma 26]{BJM13}).
Integrating the last inequality over $(0,T)$ we get the uniform estimate
\begin{equation*}
\|u_\tau^{1/4}\|_{L^4(0,T;W^{1,4}(\T))}\leq C\,.
\end{equation*}
Now observe that
\begin{equation*}
(u_\tau)_x = 4 u_\tau^{3/4}(u_\tau^{1/4})_x\,,\quad 
(u_\tau)_{xx} = 4 u_\tau^{3/4}(u_\tau^{1/4})_{xx} + 12u_\tau^{1/2}(u_\tau^{1/4})_x^2\,,
\end{equation*} 
which, again using the above estimates, implies the desired bound (\ref{3.ineq:u_tau}).
Having at hand estimates (\ref{3.ineq:dt_u_tau}) and (\ref{3.ineq:u_tau}) we can invoke
the Aubin-Lions lemma \cite{CJL14} and conclude the strong convergence
(on a subsequence as $\tau\downarrow0$)
\begin{equation}\label{3.eq:strong_u_t}
u_\tau \to u \quad\text{strongly in }L^2(0,T;W^{1,6}(\T))\,.
\end{equation}

In order to pass to the limit as $\tau\downarrow0$ in (\ref{3.eq:weak_step}), we need to explore 
some more convergence results. First, using (\ref{3.eq:strong_u_t}) we identify in
(\ref{3.eq:v}) $v = u^{1/4}$. Then, employing \cite[Proposition 6.1]{JuMi09} on (\ref{3.eq:v}) and
(\ref{3.eq:strong_u_t}) we conclude
\begin{equation}\label{3.eq:34}
u_\tau^{3/4}\to u^{3/4}\quad\text{strongly in }L^8(0,T;W^{1,8}(\T))\,,
\end{equation} 
and similarly
\begin{equation*}
u_\tau^{1/2}\to u^{1/2}\quad\text{strongly in }L^{12}(0,T;W^{1,12}(\T))\,.
\end{equation*} 
Using a stronger version of the entropy production inequality, namely
\begin{equation*}
\E(u_\tau^N) + \kappa\int_0^T\!\!\!\int_\T\left( (u_\tau^{1/4})_{xx}^2
 + (u_\tau^{1/8})_x^4 \right)\dd x\dd t \leq \E(u_0)\,
\end{equation*}
for some $\kappa>0$, which can be proved in the same fashion as the basic one, 
we immediately have the uniform bound
\begin{equation*}
\|u_\tau^{1/8}\|_{L^4(0,T;W^{1,4}(\T))} \leq C\,.
\end{equation*}
Combining the latter with (\ref{3.eq:34}) we conclude 
(again using \cite[Proposition 6.1]{JuMi09})
\begin{equation*}
u_\tau^{1/4}\to u^{1/4}\quad\text{strongly in }L^2(0,T;H^2(\T))\,.
\end{equation*}
The above convergence results are now sufficient to pass to the limit in (\ref{3.eq:weak_step}).

\subsubsection{Uniqueness} 
Estimates of the previous subsection provide $u^{1/4}\in L^2(0,T;H^2(\T))$ and 
$u^{1/2}\in L^2(0,T;H^2(\T))$, which is precisely the required regularity in \cite{Fis13},
which ensures the uniqueness of global weak solutions for equation (\ref{1.eq:eDLSS}) with $\delta=0$.
The established regularity of $u$ is sufficient to make the calculations of \cite{Fis13} 
rigorous also for the third-order term in (\ref{1.eq:eDLSS}). 
Thus, the weak solution constructed above also satisfies equation (cf.~\cite[Lemma 15]{Fis13})
\begin{align}
-\int_0^T\!\!\!\int_\T \sqrt u\pa_t\phi\,\dd x\dd t & - \int_\T \sqrt{u_0}\phi(\cdot,0)\,\dd x
= -\frac12 \int_0^T\!\!\!\int_\T (\sqrt u)_{xx}\phi_{xx}\,\dd x\dd t\\	\nonumber
&+ \frac12\int_0^T\!\!\!\int_\T \frac{\phi}{\sqrt u}(\sqrt u)_{xx}^2\,\dd x\dd t
- \frac{\delta}{2}\int_0^T\!\!\!\int_\T (\sqrt u)_{xx}\phi_{x}\,\dd x\dd t\,
\end{align}
for all test functions $\phi\in L^\infty(0,T;W^{2,\infty}(\T))\cap W^{1,1}(0,T;L^\infty(\T))$
satisfying $\phi(\cdot,T)\equiv0$.

This finishes the proof of Theorem \ref{tm:exist}.

\subsection{Large time behavior of weak solutions}
The discrete entropy production inequality
\begin{equation*}
\E(u_\tau^N) + 4\int_0^T\!\!\!\int_\T (u_\tau^{1/4})_{xx}^2\,\dd x\dd t \leq \E(u_0)\,
\end{equation*}
provides the Lyapunov stability. Namely, using the weak lower semicontinuity 
of the fun\-cti\-o\-nal on the left hand side, for the weak solution $u$ of (\ref{1.eq:eDLSS}) we have
\begin{equation*}
\sup_{t\in (0,T)}\E(u(t)) \leq \E(u_0)\,.
\end{equation*}
In order to conclude a stronger result, one needs an entropy -- entropy production inequality
which stems from a Beckner type inequality. Unfortunately, the ``global'' Beckner
inequality of type
\begin{equation}\label{3.ineq:Beckner}
\frac{p}{p-1}\left(\int_\T f^2\dd x - \left( \int_\T f^{2/p}\dd x\right)^p  \right)
\leq C_{B}\int_\T (f_{xx})^2\dd x
\end{equation} 
is valid for $p\in (1,2]$. In order to apply such inequality in our case, we would need
inequality (\ref{3.ineq:Beckner}) with $p=1/2$, which is out of the scope here. 
Therefore, we rely on an ``asymptotic'' Beckner inequality proved in \cite[Corollary 2]{CDGJ06}:
for any $p > 0$, $q \in \R$ and $\eps_0 > 0$, there exists a positive constant $C$ 
(depending on $p,q$ and $\eps_0$)
such that, for any $\eps\in(0,\eps_0]$
\begin{equation}\label{ineq:aBeck}
\Sigma_{p,q}(f):=\frac{1}{pq(pq-1)}\left(\int_{\T} f^q\dd x - \left(\int_{\T} f^{1/p}\dd x \right)^{pq} \right)
\leq \frac{1+C\sqrt\eps}{32p^2\pi^4}\int_\T\left(f_{xx}\right)^2\dd x
\end{equation}
for all $f\in \mathcal{X}_\eps^{p,q} = \left\{ f\in H^2(\T)\ : 
\ f\geq0\text{ a.e.},\ \Sigma_{p,q}(f)\leq \eps \text{ and }\int_{\T} f^{1/p}\dd x = 1  \right\}
$. 
\begin{proof}[Proof of Theorem \ref{tm:ltb}] Let $u_0$ be given initial datum, $\tau>0$ and 
let $u_\tau^1, u_\tau^2, \ldots$ be 
the sequence of solutions to the semi-discrete problem constructed in section (\ref{sec:semidiscrete}).
The discrete entropy production inequality (\ref{ineq:depi}) provides
\begin{equation}\label{ineq:depi}
\E(u_\tau^k) + 4\tau \int_\T ((u^k_\tau)^{1/4})_{xx}^2\dd x \leq \E(u_\tau^{k-1})\,,\quad\forall k\in\N\,.
\end{equation}
Employing inequality (\ref{ineq:aBeck}) with $p=1/4$, $q=2$, $\eps_0 = \E(u_0)$ and $f = (u^k_\tau)^{1/4}$
it readily follows 
\begin{equation}\label{ineq:deepi}
\E(u_\tau^k) \leq \frac{1+C\sqrt{\eps_0}}{2\pi^4}\int_\T ((u^k_\tau)^{1/4})_{xx}^2\dd x
\,,\quad\forall k\in\N\,.
\end{equation}
Combining (\ref{ineq:depi}) and (\ref{ineq:deepi}) yields
\begin{equation*}
\E(u_\tau^k) + \frac{8\pi^4\tau}{1+C\sqrt{\eps_0}}\E(u_\tau^k) 
\leq \E(u_\tau^{k-1})\,,\quad\forall k\in\N\,,
\end{equation*}
which passing to the limit $\tau\downarrow0$ implies
\begin{equation*}
\E(u(t))\leq \E(u_0)e^{-\frac{8\pi^4t}{1+C\sqrt{\eps_0}}}\,,\quad t>0\,.
\end{equation*}
The well known relation between the $L^1$ and the Hellinger distance finally provides
\begin{equation*}
\|u(t)-1\|_{L^{1}(\T)} \leq 2\H(u(t),1) = \sqrt{\E(u(t))} \leq \sqrt{\E(u_0)}e^{-\frac{4\pi^4t}{1+C\sqrt{\eps_0}}}
\,,\quad t>0\,.
\end{equation*}
\end{proof}

\section{A structure preserving numerical scheme}

\subsection{Introduction of the scheme}\label{sec:41}
In this section we devise a numerical scheme for equation (\ref{1.eq:eDLSS}), which 
respects its basic properties: nonnegativity, mass conservation and the dissipation of the
 Fisher information on the discrete level. More precisely, the scheme is a discretization of 
(\ref{2.eq:gf_form}) with the time discretization inspired by (\ref{3.eq:semi_discrete_new}).
It is a discrete variational derivative (DVD) type scheme, which is a slight modification of the
scheme from \cite{BEJ14} proposed for the original DLSS equation. 
The advantage of the method proposed here is the error estimate given in terms of the
discrete Hellinger distance.

Let $\T_N = \{x_i\ : \ i=0,\ldots,N,\ x_0\cong x_N\}$ denotes an
equidistant discrete grid of mesh size $h$ on the one dimensional torus $\T
\cong [0,1)$ and let the vector $U^k\in \R^N$ with components $U_i^k$ approximates
the solution $u(t_k,x_i)$ for $i=0,\ldots,N-1$ and
$k\geq0$. 
We will use the following standard finite difference operators. 
For $U\in\R^N$ define:

\begin{tabular}{ll}
forward difference: & $\delta_i^+U = h^{-1}(U_{i+1} - U_{i})$,\\
backward difference: & $\delta_i^-U = h^{-1}(U_i - U_{i-1})$,\\
central difference: & $\delta_i^{\langle1\rangle}U = (2h)^{-1}(U_{i+1} - U_{i-1})$,\\
2nd order central difference: & 
$\delta_i^{\langle2\rangle}U = \delta_i^+\delta_i^-U = h^{-2}(U_{i+1} - 2U_i + U_{i-1})$.
\end{tabular} 

\noindent To approximate the
integral of one-periodic functions $w$, we use the first-order quadrature rule
$\sum_{i=0}^{N-1}w(x_i)h$. This rule is in fact of the second order,
since due to the periodic boundary conditions it coincides with the trapezoidal rule
$(w(x_0) + w(x_N))h/2 + \sum_{i=1}^{N-1}w(x_i)h$.

The first step is to define a discrete analogue of the Fisher information
$\F_{\dd}:\R^N\to\R$ as an approximation of the true Fisher information $\F$. 
The basic idea of DVD methods is to perform a discrete variation procedure 
and calculate the corresponding discrete variational derivative.
We approximate the Fisher information $\F(u)$ by
\begin{equation}\label{def:discreteFisher}
 \Fd[U] = \frac12\sum_{i=0}^{N-1}\big((\delta_i^+V_i)^2 +
(\delta_i^-V_i)^2\big)h\,,
\end{equation}
where $U\in\R^N$ and $V_i = \sqrt{U_i}$ for $i=0,\ldots,N-1$. 
Applying the discrete variation procedure and using
summation by parts formula (see \cite[Proposition 3.2]{FuMa10}) for periodic
boundary conditions, we calculate:
\begin{align*}
\Fd[U^{k+1}] &- \Fd[U^k] = \frac12\sum_{i=0}^{N-1}\left((\delta_i^+V_i^{k+1})^2
- (\delta_i^+V_i^k)^2 + (\delta_i^-V_i^{k+1})^2 - (\delta_i^-V_i^k)^2\right)h\\
&= \frac12\sum_{i=0}^{N-1}\big(\delta_i^+(V_i^{k+1} + V_i^k)\delta_i^+(V_i^{k+1}
- V_i^k) +  \delta_i^-(V_i^{k+1} + V_i^k)\delta_i^-(V_i^{k+1} -
V_i^k)\big)h\\
&= -\sum_{i=0}^{N-1}\delta_i^{\langle 2\rangle}(V_i^{k+1} + V_i^k)(V_i^{k+1} -
V_i^k)h
= -\sum_{i=0}^{N-1}\frac{\delta_i^{\langle 2\rangle}(V_i^{k+1} +
V_i^k)}{V_i^{k+1} + V_i^k}(U_i^{k+1} - U_i^k)h
\end{align*}
for $k\geq0$.

The discrete variational derivative, denoted by $\delta
\Fd(U^{k+1},U^k)\in\R^N$, is then defined componentwise by
\begin{equation}\label{discr.var}
\delta \Fd(U^{k+1},U^k)_i := -\frac{\delta_i^{\langle
2\rangle}(V_i^{k+1} + V_i^k)}{V_i^{k+1} + V_i^k}\,,\quad i=0,\ldots,N-1\,,
\end{equation}
and the main point is that the discrete chain rule holds
\begin{align*}
\Fd[U^{k+1}] - \Fd[U^k] &= \sum_{i=0}^{N-1}\delta
\Fd(U^{k+1},U^k)_i(U_i^{k+1} - U_i^k)h\,.
\end{align*}

Having this at hand, the DVD scheme for the corrected DLSS equation is defined by the following 
nonlinear system with unknowns
$V^{k+1}_i = \sqrt{U^{k+1}_i}$:
\begin{align}\label{sh.dvdm}
\frac{1}{\tau}(V_i^{k+1} - V_i^k) = \frac{1}{2W^{k+1/2}_i}\delta_i^+\left(W^{k+1/2}_iW^{k+1/2}_{i-1}\delta_i^-
\left(\delta \Fd(W^{k+1/2})_i\right)\right)\\  - 
\frac{\delta}{2} \delta_i^{\langle1\rangle}\left(W^{k+1/2}_i\delta \Fd(W^{k+1/2})_i\right)\,,
\nonumber
\end{align}
for all $i=0,\ldots,N-1\,,\ k\geq 0\,,$ 
where $W^{k+1/2} = (V^{k+1} + V^k)/2$. 

\subsection{Convergence analysis --- proof of Theorem \ref{tm:dvds}}
Basic properties of the scheme
conservation of mass and dissipation of the discrete Fisher information follow directly
from the construction of the scheme, summation by parts and the above discrete chain rule.
Our main aim is to prove the convergence of the scheme. For this purpose we first prove 
the monotonicity of the following discrete operator $\Ad :\R^N_+\to\R^N$ defined by
\begin{equation}\label{4.def:monoton}
\Ad(W)_i = \frac{1}{W_i}\delta_i^+\left(W_iW_{i-1}\delta_i^-\delta\Fd(W)_i\right)\,,
\quad i=0,\ldots,N-1\,.
\end{equation}
Operator $\Ad$ is a discrete analogue of the differential operator 
\begin{equation*}
\A(w) = \frac{1}{w}\left(w^2\left(\frac{w_{xx}}{w}\right)_x\right)_{x}
\end{equation*}
whose monotonicity has been shown in \cite{JuPi01}.
\begin{proposition}
Operator $\Ad :\R^N_+\to\R^N$ defined by (\ref{4.def:monoton}) is monotone.
\end{proposition}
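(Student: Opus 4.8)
The plan is to establish monotonicity in the standard discrete $L^2$ inner product, i.e.\ to show that
\begin{equation*}
\Sigma := h\sum_{i=0}^{N-1}\big(\Ad(W)_i - \Ad(\tilde W)_i\big)(W_i - \tilde W_i) \le 0
\qquad\text{for all } W,\tilde W\in\R^N_+,
\end{equation*}
which is exactly the dissipativity one needs afterwards to close the error estimate (\ref{ineq:error_estimate}), once the difference of the scheme and the exact equation is tested against the error. The guiding idea is that $\Ad$ is the discrete analogue of $-\A$ (the minus coming from $\delta\Fd$), and that the continuous quadratic form collapses by a clean algebraic identity: writing $p=w_{xx}/w$, $q=\tilde w_{xx}/\tilde w$ and $D=w\tilde w_x-\tilde w w_x$, two integrations by parts together with $p-q=-D_x/(w\tilde w)$ turn $\int_\T(\A(w)-\A(\tilde w))(w-\tilde w)\,\dd x$ into $\int_\T (p-q)^2 w\tilde w\,\dd x\ge 0$. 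I would transcribe this computation verbatim to the grid, expecting $\Sigma$ to come out with the opposite sign.

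First I would introduce the shorthand $P_i := \delta_i^{\langle2\rangle}W/W_i = -\delta\Fd(W)_i$, the discrete analogue of $w_{xx}/w$, and the \emph{discrete Wronskian}
\begin{equation*}
D_i := h^{-1}\big(W_i\tilde W_{i-1} - W_{i-1}\tilde W_i\big).
\end{equation*}
Pairing $\Ad(W)_i$ with $W_i-\tilde W_i$, the prefactor $1/W_i$ combines with $W_i-\tilde W_i$ into the weight $1-\tilde W_i/W_i$; summation by parts (\cite[Proposition 3.2]{FuMa10}) moves $\delta_i^+$ onto it, the constant part drops, and only $-\delta_i^-(\tilde W/W)$ survives. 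The crucial point is that the product $W_iW_{i-1}$ built into $\Ad$ multiplies $\delta_i^-(\tilde W/W)$ and telescopes \emph{exactly} into $-D_i$, giving
\begin{equation*}
h\sum_i \Ad(W)_i(W_i-\tilde W_i) = h\sum_i (\delta_i^- P)\,D_i.
\end{equation*}

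Running the same computation with the roles of $W$ and $\tilde W$ interchanged yields the analogous term carrying $\tilde P_i=\delta_i^{\langle2\rangle}\tilde W/\tilde W_i$ and the same $D_i$, so the two contributions combine into $\Sigma = h\sum_i \delta_i^-(P-\tilde P)\,D_i$. A final summation by parts turns this into $\Sigma = -h\sum_i (P_i-\tilde P_i)\,\delta_i^+ D$, and I would then verify the exact discrete identity
\begin{equation*}
\delta_i^+ D = (P_i-\tilde P_i)\,W_i\tilde W_i,
\end{equation*}
whose two sides both expand to $h^{-2}\big[\tilde W_i(W_{i+1}+W_{i-1}) - W_i(\tilde W_{i+1}+\tilde W_{i-1})\big]$. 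Substituting gives $\Sigma = -h\sum_i (P_i-\tilde P_i)^2\,W_i\tilde W_i \le 0$ since $W,\tilde W\in\R^N_+$, which is the assertion.

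The hard part is not any single estimate but forcing every discrete product and quotient manipulation to be \emph{exact}: on the grid there is no pointwise Leibniz or quotient rule, so the telescoping into $D_i$ and the identity $\delta_i^+ D=(P_i-\tilde P_i)W_i\tilde W_i$ hold only because of the specific staggering in $\Ad$ — in particular the off-diagonal product $W_iW_{i-1}$ paired with the backward difference $\delta_i^-$. I would therefore check the index bookkeeping at these two points (and the periodic wrap-around of all sums) most carefully; once the Wronskian identity is in place, strict positivity of $W$ and $\tilde W$ supplies the sign for free.
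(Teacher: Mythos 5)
Your algebra is sound and, underneath the notation, this is essentially the paper's own proof: two summations by parts plus exact discrete product/quotient identities collapse the pairing into a weighted sum of squares with weight $W_i\tilde W_i$, and your Wronskian identity $\delta_i^+D=(P_i-\tilde P_i)W_i\tilde W_i$ plays precisely the role of the paper's ``discrete differentiation rules''. The one genuine discrepancy is the \emph{sign} of the conclusion, and it deserves comment. Taking the definition (\ref{4.def:monoton}) literally, with $\delta\Fd(W)_i=-\delta_i^{\langle2\rangle}W/W_i$, you are right: $\Ad$ is the discrete analogue of $-\A$, and the quadratic form satisfies
\begin{equation*}
h\sum_{i=0}^{N-1}\bigl(\Ad(W)-\Ad(\tilde W)\bigr)_i\bigl(W-\tilde W\bigr)_i
=-h\sum_{i=0}^{N-1}W_i\tilde W_i\,\bigl(\delta\Fd(W)-\delta\Fd(\tilde W)\bigr)_i^2\le 0\,,
\end{equation*}
i.e.\ it is $-\Ad$ that is monotone in the standard sense. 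The paper concludes the opposite sign, $+h\sum_i w_iW_i(\delta\Fd(w)-\delta\Fd(W))_i^2\ge 0$, which is correct only if the minus sign carried by $\delta\Fd$ is stripped out of the definition of $\Ad$ (making $\Ad$ the analogue of $\A$, as the paper states in words); the same ambivalence is visible in the paper rewriting the scheme as $-\tfrac12\Ad(W^{k+1/2})+\cdots$ in (\ref{4.sh:operator}), even though with (\ref{4.def:monoton}) taken literally the first term of (\ref{sh.dvdm}) equals $+\tfrac12\Ad(W^{k+1/2})$. The discrepancy is harmless where the proposition is used: in the error equation (\ref{4.sh:error}) the sign in front of $\Ad$ and the sign of the quadratic form flip together, so the monotonicity term is discarded in either convention, and your $\Sigma\le 0$ is exactly what closes the estimate (\ref{ineq:error_estimate}) for the scheme as literally written in (\ref{sh.dvdm}). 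The only correction I would insist on is terminological: do not call $\Sigma\le 0$ ``monotonicity'' of $\Ad$; state it as monotonicity of $-\Ad$ (equivalently, dissipativity of $\Ad$), or else adopt the paper's sign convention inside the definition of $\Ad$ and then prove the inequality $\ge 0$.
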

\begin{proof}
Let $w,W\in\R^N$ be arbitrary vectors from the cone $\R_+^N$. 
Applying the summation by parts formula twice we compute
\begin{align*}
(\Ad(w)-\Ad(W))&\cdot(w-W) \\
&= -h\sum_{i=0}^{N-1}\left(\delta \Fd(w) - \delta\Fd(W)\right)_i
\delta_i^+\left(w_iw_{i-1}\delta_i^-\left(\frac{w_i-W_i}{w_i} \right) \right) \\
&\  - \delta\Fd(W)_i\delta_i^+\left(W_iW_{i-1}\delta_i^-\left(\frac{w_i-W_i}{W_i} \right)
- w_iw_{i-1}\delta_i^-\left(\frac{w_i-W_i}{w_i} \right) \right)\,.
\end{align*}
Employing discrete differentiation rules:
\begin{align*}
w_iw_{i-1}\delta_i^-\left(\frac{w_i-W_i}{w_i} \right) &= w_{i-1}\delta_i^-(w-W) - (\delta_i^-w)(w-W)_i\,,\\
\delta_i^+(wW) &= (\delta_i^+w)W_{i+1} + w_i(\delta_i^+W) = w_{i+1}(\delta_i^+W) + W_i(\delta_i^+w) \,,\\
\delta_i^+(w_{i-1}\delta_i^-(w-W)) &= w_iW_i\left(\frac{\delta_i^{\langle2\rangle}w}{w} - 
\frac{\delta_i^{\langle2\rangle}W}{W}\right) = -w_iW_i\left( \delta \Fd(w) - \delta\Fd(W) \right)_i\,,
\end{align*}
we find 
\begin{align*}
(\Ad(w)-\Ad(W))\cdot(w-W) = h\sum_{i=0}^{N-1}w_iW_i\left(\delta \Fd(w) - \delta\Fd(W)\right)^2_i
\geq 0\,,
\end{align*}
which proves the monotonicity of $\Ad$. 
\end{proof}
With the help of operator $\Ad$ the discrete scheme
(\ref{sh.dvdm}) can be written as
\begin{equation}\label{4.sh:operator}
\frac{1}{\tau}(V_i^{k+1} - V_i^k) = -\frac12\Ad(W^{k+1/2})_i + 
2\delta \delta_i^{\langle1\rangle}\left(\delta_i^{\langle2\rangle}W^{k+1/2}\right)\,.
\end{equation}
Let $u^k\in\R^N_+$ denotes the vector of true solution values at grid points $x_i$ at time $t_k$,
i.e.~$u_i^k = u(t_k, x_i)$, and let $v^k = \sqrt{u^k}$. Then we have
\begin{equation}\label{4.sh:true_sol}
\frac{1}{\tau}(v_i^{k+1} - v_i^k) = -\frac12\Ad(w^{k+1/2})_i + 
2\delta \delta_i^{\langle1\rangle}\left(\delta_i^{\langle2\rangle}w^{k+1/2}\right) + f_i^{k+1/2}\,,
\end{equation}
where $w_i^{k+1/2} = (v_i^{k+1} + v_i^k)/2$ and values $f_i^{k+1/2}$ represent the local 
truncation error of the scheme.
Subtracting (\ref{4.sh:operator}) from (\ref{4.sh:true_sol}) we get the discrete equation for the error
vector $e^k:=v^k - V^k$ which reads
\begin{equation}\label{4.sh:error}
\frac{1}{\tau}(e_i^{k+1} - e_i^k) = -\frac12\left(\Ad(w^{k+1/2})_i - \Ad(W^{k+1/2})_i\right) + 
2\delta \delta_i^{\langle1\rangle}\left(\delta_i^{\langle2\rangle}e^{k+1/2}\right) + f_i^{k+1/2}\,.
\end{equation}
Multiplying (\ref{4.sh:error}) with $e_i^{k+1/2} = w_i^{k+1/2} - W_i^{k+1/2}$ and summing up over
$i=0,\ldots, N-1$ we find
\begin{align*}
\frac{h}{2\tau}\sum_{i=0}^{N-1}\left(\left(e_i^{k+1}\right)^2 - \left(e_i^{k}\right)^2\right)
= -\frac{h}{2}\sum_{i=0}^{N-1}\left(\Ad(w^{k+1/2})_i - \Ad(W^{k+1/2})_i\right)(w_i^{k+1/2} - W^{k+1/2}_i)\\
-2\delta h\sum_{i=0}^{N-1}\delta_i^{\langle2\rangle}e^{k+1/2}\delta_i^{\langle1\rangle}e^{k+1/2}
 + h\sum_{i=0}^{N-1}f_i^{k+1/2}e_i^{k+1/2}\,.
\end{align*} 
Employing the monotonicity of the opertor $\Ad$ and the fact that 
$\sum_{i=0}^{N-1}\delta_i^{\langle2\rangle}e^{k+1,k}\delta_i^{\langle1\rangle}e^{k+1,k}=0$
due to periodic boundary conditions and summation by parts formula, we estimate
\begin{align*}
\frac{h}{2}\sum_{i=0}^{N-1}\left(\left(e_i^{k+1}\right)^2 - \left(e_i^{k}\right)^2\right)
\leq  \tau h\sum_{i=0}^{N-1}f_i^{k+1/2}e_i^{k+1/2}\,.
\end{align*}
Using the Cauchy-Schwarz, Young and Jensen's inequalities we further estimate the right hand side
and get
\begin{align*}
\frac{h}{2}\sum_{i=0}^{N-1}\left(\left(e_i^{k+1}\right)^2 - \left(e_i^{k}\right)^2\right)
\leq  \frac{\tau h}{2}\sum_{i=0}^{N-1}\left(f_i^{k+1/2}\right)^2 
+ \frac{\tau h}{4}\sum_{i=0}^{N-1}\left( (e_i^{k+1})^2 + (e_i^k)^2\right)\,.
\end{align*}

It has been proved in \cite{BEJ14} that the local truncation error of the DVD method for
a sufficiently smooth solution is of order
$O(\tau) + O(h^2)$. Analogous arguments can be utilized here for this slightly modified scheme. 
Therefore, summing up the last inequality for $k=0,\ldots,M$ we have
\begin{align*}
(1-\tau)\frac{h}{2}\sum_{i=0}^{N-1}\left(e_i^{M+1}\right)^2 \leq 
\frac{h}{2}\sum_{i=0}^{N-1}\left(e_i^{0}\right)^2 + C(\tau^2 + h^4) + 
\frac{\tau h}{2}\sum_{k=0}^M \sum_{i=0}^{N-1}\left(e_i^{k}\right)^2\,,
\end{align*}
where $C>0$ is independent of $\tau$ and $h$.
If we assume that $e^0 = 0$, or at least small enough, then the discrete Gronwall inequality
implies (for $\tau < 1$)
\begin{equation}
\frac{h}{2}\sum_{i=0}^{N-1}\left(e_i^{M+1}\right)^2 \leq \frac{C(\tau^2 + h^4)}{1-\tau}
e^{\frac{\tau(M+1)}{2(1-\tau)}}\,\quad\text{for all } M\geq 0\,,
\end{equation}
which concludes the proof of Theorem \ref{tm:dvds}.

\subsection{Implementation and illustrative examples}
In this final subsection we illustrate numerical solutions to the corrected DLSS equation 
using the DVD method. Prior to that we expand terms of the scheme (\ref{sh.dvdm}) and obtain a 
novel form in unknowns $W = W^{k+1/2} = (V^{k+1} + V^k)/2$:
\begin{align}\label{sh.dvdW}
W_i - V^k_i &= -\frac{\tau}{4h^4}\left(W_{i+2} + 2W_i + W_{i-2} - \frac{(W_{i+1} + W_{i-1})^2}{W_i} \right)\\
 &\qquad + \frac{\tau\delta}{8h^3}\left(W_{i+2} - 2W_{i+1} + 2W_{i-1} - W_{i-2} \right)\,,
 \quad i=0,\ldots, N-1\,,\ k\geq0\,.
 \nonumber
\end{align}
Numerical solution $U^k$ of equation (\ref{2.eq:gf_form}) is then resolved according to
\begin{equation*}
U^{k+1}_i = (2W_i - V^k_i)^2\,,\quad i=0,\ldots,N-1,\,,\ k\geq0\,.
\end{equation*}

Note that system (\ref{sh.dvdW}) is easier to treat numerically than the system
(\ref{sh.dvdm}).
Numerical solutions are computed for two different initial conditions: (I)
$u_0 = M^{-1}_1(\cos(\pi x)^{16} + 0.1)$ (first column of Figure \ref{fig:log.sol}) and 
(II) $u_0 = M^{-1}_2(\cos(2\pi x)^{16} + 0.01)$
(second column of Figure \ref{fig:log.sol}), where constants $M_1, M_2>0$ are taken such 
that $u_0$ have unit mass. Different rows in Figure \ref{fig:log.sol} denote different 
dispersion parameter $\delta$, i.e.~$\delta = 1$, $\delta = 10$ and $\delta = 100$ in the first,
second and third row of Figure \ref{fig:log.sol}, respectively. In each subfigure numerical evolution 
is sketched in five time instances starting from the initial datum $u_0$.
Discretization parameters are taken to be $\tau = 10^{-6}$ and $h = 5\cdot10^{-3}$, 
and the nonlinear scheme (\ref{sh.dvdW})
is solved by the Newton's method using the solution from the previous time step as an initial guess
for the solution on the current time step. Complete algorithm is implemented in \verb1Matlab1.
Figure \ref{fig:log.sol} also illustrates convergence of numerical solutions to the constant steady  
state $u_\infty = 1$, as indicated by Theorem \ref{tm:ltb}.

\begin{figure}[!t]
\centering
\subfloat[]{
\includegraphics[width=70mm]{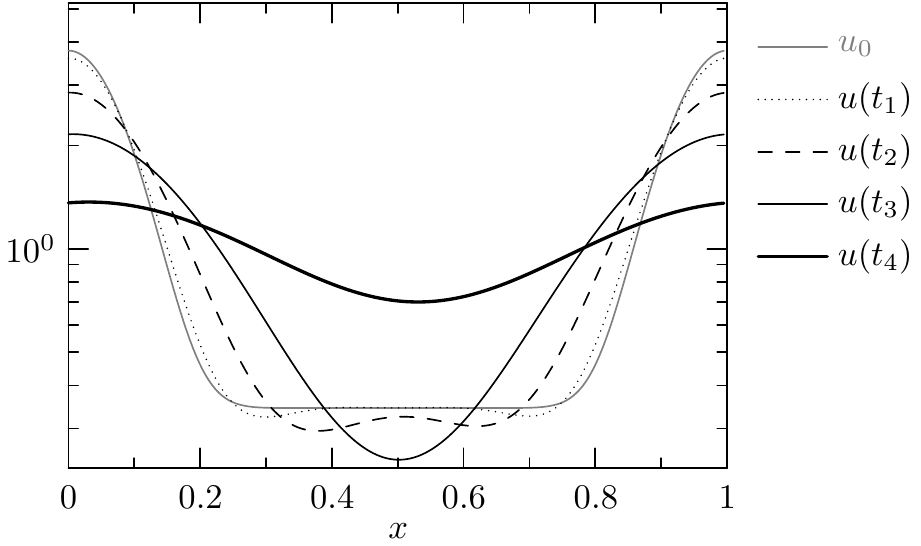}  
\label{fig:T11}
}
\hspace{5mm} 
\subfloat[]{
\includegraphics[width=70mm]{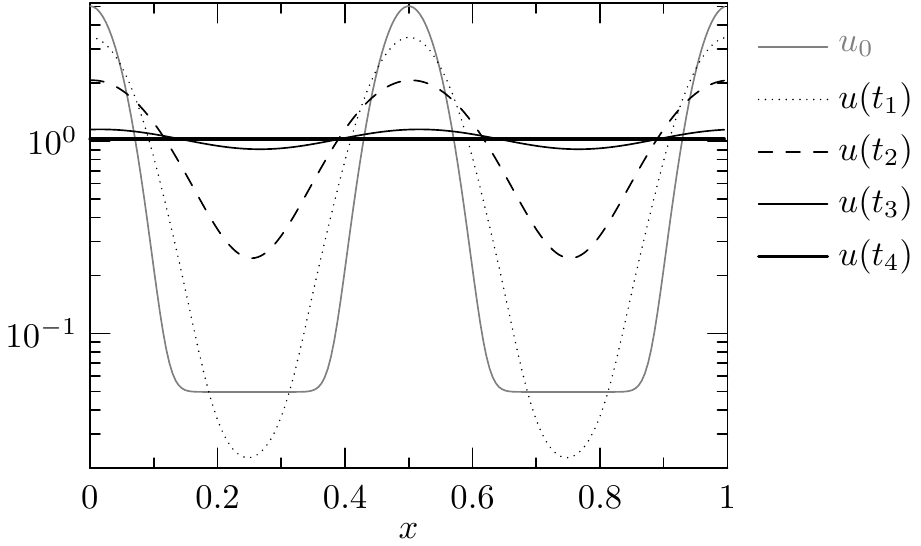}
\label{fig:T21}
}

\subfloat[]{
\includegraphics[width=70mm]{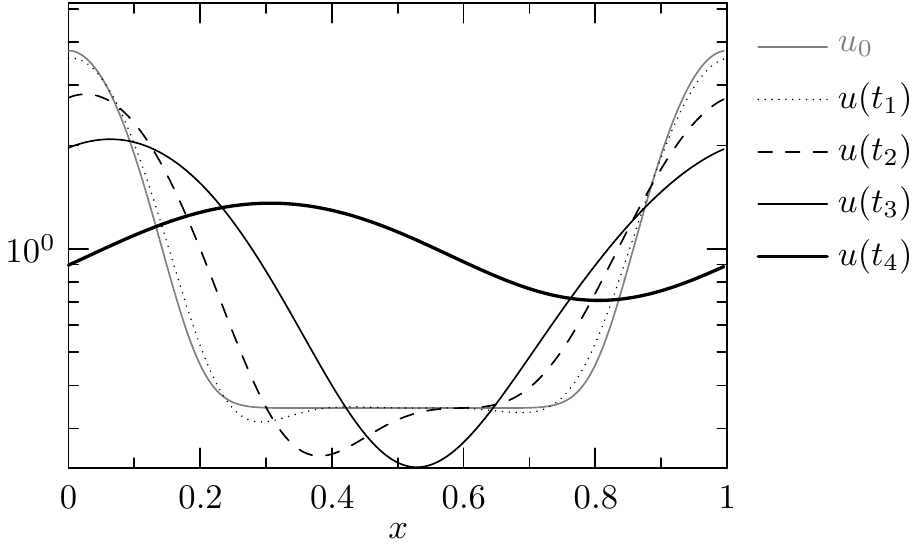}  
\label{fig:T110}
}
\hspace{5mm} 
\subfloat[]{
\includegraphics[width=70mm]{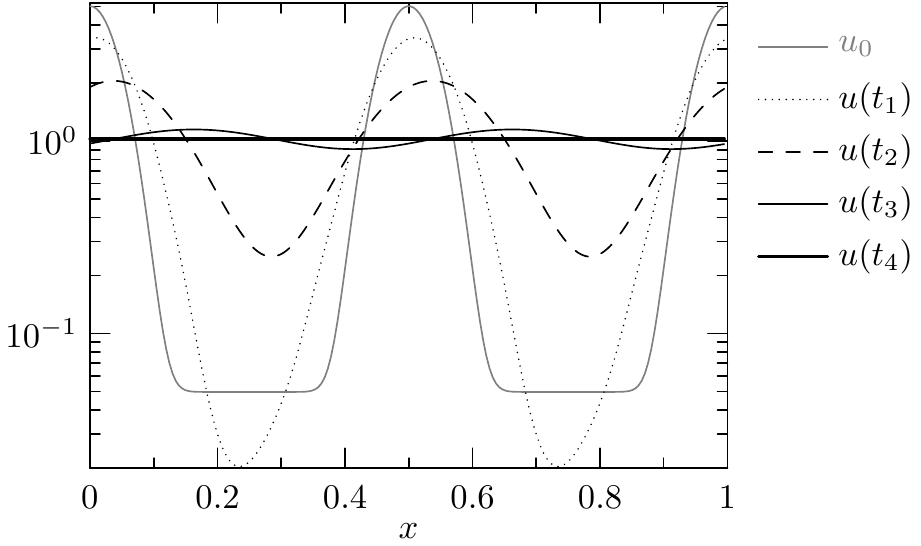}
\label{fig:T210}
}

\subfloat[]{
\includegraphics[width=70mm]{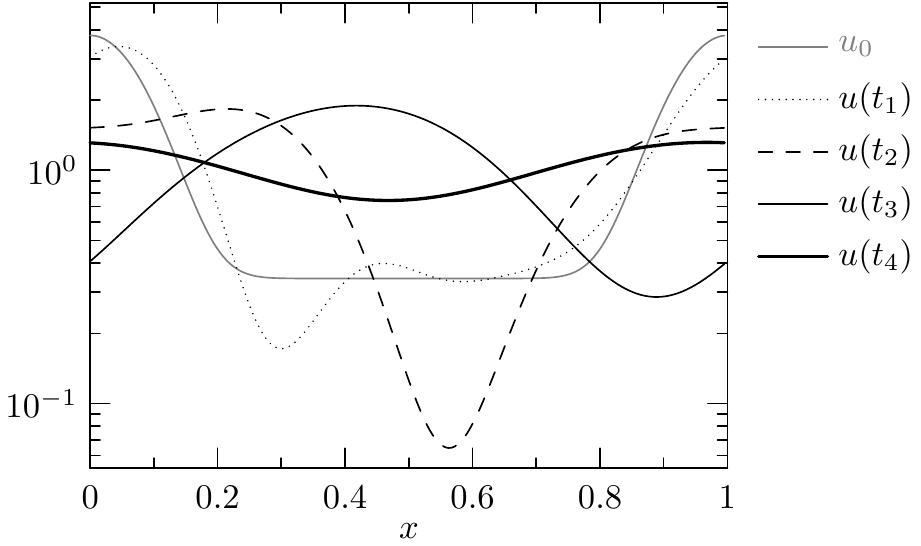}  
\label{fig:T1100}
}
\hspace{5mm} 
\subfloat[]{
\includegraphics[width=70mm]{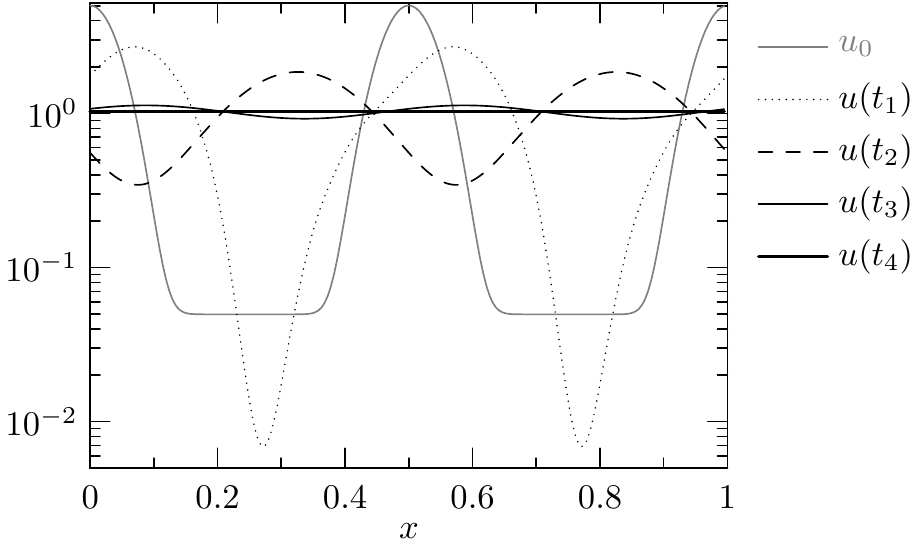}
\label{fig:T2100}
}

\caption{Numerical evolution of the corrected DLSS equation for unit mass initial datum $u_0$ at 
different time moments: $t_1 = 5\cdot10^{-6}$, $t_2 = 4\cdot10^{-5}$, $t_3 = 2\cdot10^{-4}$, and
$t_4 = 1.5\cdot10^{-3}$.}
\label{fig:log.sol}
\end{figure}

Numerical scheme (\ref{sh.dvdW}) is additionally explored by testing its numerical convergence 
rates, both in space and time. For time convergence we set $\delta = 1$, 
$u_0 = M^{-1}_1(\cos(\pi x)^{16} + 0.1)$ and $h=2\cdot10^{-3}$. The ``exact solution'' $\hat{u}$
is computed on the very fine time resolution $\tau = 10^{-9}$ and all other numerical 
solutions $U^\tau$ are compared at time instance $T = 5\cdot10^{-5}$ using the discrete
Hellinger distance $\H_{\dd}$ defined by (\ref{def.discreteHell}), 
i.e.~we calculate the error at time step $M$ corresponding to time 
instance $T$ as
\begin{equation*}
\|e^M\|_{h,l^2}:=\H_{\dd}(\hat u^M,U^M)^2 = \frac{h}{2}\sum_{i=0}^{N-1}\left(\sqrt{\hat u_i^M} 
- \sqrt{U_i^M}\right)^2\,.
\end{equation*}
Results of this numerical experiment are shown in Figure \ref{fig:conv_rates} as well as
in the Table \ref{tab:conv_rates}. One can see that they are in agreement with the
theoretical result of Theorem \ref{tm:dvds}.

\begin{figure}[!t]
\centering
\subfloat[]{
\includegraphics[height=70mm]{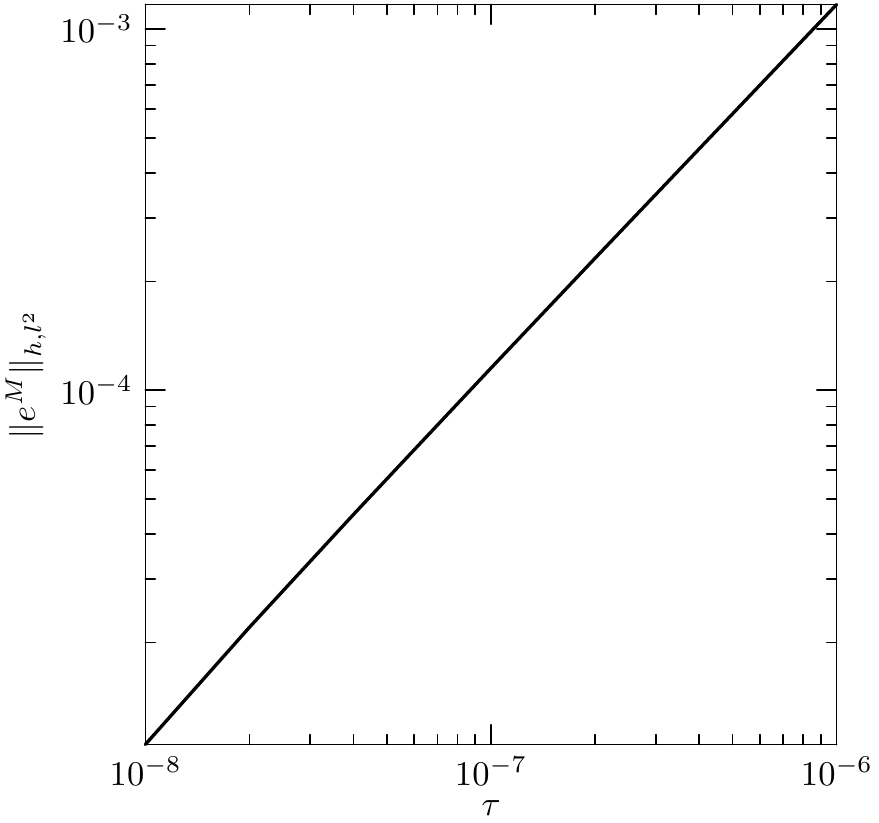}  
\label{fig:time_conv_rate}
}
\hspace{5mm} 
\subfloat[]{
\includegraphics[height=70mm]{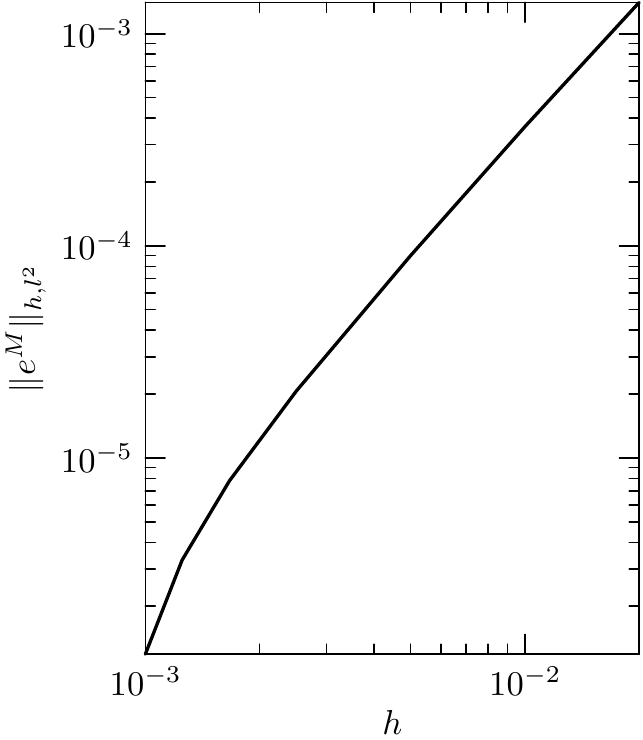}
\label{fig:space_conv_rate}
}
\caption{Errors with respect to time and space discretization parameters.}
\label{fig:conv_rates}
\end{figure}

\begin{table}[!h]
\begin{tabular}{r|l}
$\tau$ & conv.~rate\\
\hline
$10^{-8}$ & ~ \\
$2\cdot 10^{-8}$ & $1.0783$\\
$4\cdot 10^{-8}$ & $1.0381$\\
$8\cdot 10^{-8}$ & $1.0195$\\
$10^{-7}$ & $1.0185$\\
$2\cdot 10^{-7}$ & $1.0106$\\
$4\cdot 10^{-7}$ & $1.0054$\\
$5\cdot 10^{-7}$ & $1.0050$\\
$10^{-6}$ & $1.0059$
\end{tabular}
\hspace{45mm}
\begin{tabular}{r|l}
$h$ & conv.~rate\\
\hline
$10^{-3}$ & ~ \\
$1.3\cdot 10^{-3}$ & $4.5764$\\
$1.7\cdot 10^{-3}$ & $2.9940$\\
$2.5\cdot 10^{-3}$ & $2.4015$\\
$5\cdot 10^{-3}$ & $2.1202$\\
$1\cdot 10^{-2}$ & $2.0177$\\
$2\cdot 10^{-2}$ & $1.9454$
\end{tabular}
\caption{Numerical convergence rates in time (left) and space (right).}
\label{tab:conv_rates}
\end{table}


\begin{thebibliography}{11}


\bibitem{AGS08} L.~Ambrosio, N.~Gigli, and G.~Savare. Gradient Flows
in Metric Spaces and in the Space of Probability Measures. Birkh\"auser Basel, 2008.
Authors: 

\bibitem{BeGr} J.~Becker and G.~Gr\"un.
  The thin-film equation: Recent advances and some new perspectives.
  \emph{J. Phys.: Condens. Matter} 17 (2005), 291--307.

\bibitem{BeFr90} F.~Bernis and A.~Friedman.
 Higher order nonlinear degenerate parabolic equations.
 {\em J. Diff. Eqs.} 83 (1990), 179--206.

\bibitem{Ber98}
A. Bertozzi. The mathematics of moving contact lines in thin liquid films. 
{\em Notices Amer.~Math. Soc.}, 45 (1998), 689-697.

\bibitem{BLS94} P.~Bleher, J.~Lebowitz, and E.~Speer. Existence and
positivity of solutions of a fourth-order nonlinear PDE describing interface
fluctuations. {\em Commun. Pure Appl. Math.} 47 (1994), 923--942.

\bibitem{BGT15} C.~Bordenave, P.~Germain and T.~Trogdon. 
An extension of the Derrida--Lebowitz--Speer--Spohn equation. 
{\em J.~Phys.~A: Math.~Theor.} 48 (2015).

\bibitem{BEJ14} M.~Bukal, E.~Emmrich, A.~J\"ungel. Entropy-stable and entropy-dissipative approximations
of a fourth-order quantum diffusion equation. {\em Numerische Mathematik} 127 (2014), 365--396.

\bibitem{BJM13} M.~Bukal, A.~J\"ungel, D.~Matthes. 
A multidimensional nonlinear sixth-order quantum diffusion equation. 
{\em Annales de l'IHP Analyse non lin\'eaire} 30 (2013), 337-365.

\bibitem{BHS09} M.~Burger, L.~He, C.-B.~Sch\"onlieb. 
Cahn-Hilliard inpainting and a generalization for grayvalue images.
{\em SIAM Journal on Imaging Sciences} 2 (2009), 1129--1167.

\bibitem{CaHi58} J.~W.~Cahn and J.~E.~Hilliard, Free energy of a nonuniform
system. I. Interfacial free energy. {\em The Journal of Chemical Physics} 28
(1958), 258--267.

\bibitem{CDGJ06}J.~Carrillo, J.~Dolbeault, I.~Gentil, A.~J\"ungel. Entropy-Energy 
inequalities and improved convergence rates for nonlinear parabolic equations. 
{\em Discrete Contin. Dyn. Syst. Ser. B} 6 (2006), 1027--1050. 

\bibitem{CJT03} J.~A.~Carrillo, A.~J\"ungel, S.~Tang. 
Positive entropic schemes for a nonlinear fourth-order equation.
{\em Discrete Contin. Dyn. Syst. B} 3 (2003), 1--20.

\bibitem{CaTo02}
J.~A.~Carrillo, G.~Toscani. Long-Time Asymptotics for Strong Solutions
of the Thin Film Equation. {\em Commun.~Math.~Phys.} 225 (2002), 551--571.

\bibitem{CJL14} X.~Chen, A.~J\"ungel, and J.-G.~Liu. A note on Aubin-Lions-Dubinskii lemmas.
{\em Acta Appl. Math.~}133 (2014), 33--43.

\bibitem{CDGKSZ93} P.~Constantin, T.~Dupont, R.~E.~Goldstein, L.~P.~Kadanoff,
M.~J.~Shelley, and S.~M.~Zhou.
Droplet breakup in a model of the Hele-Shaw cell.
{\em Phys. Rev. E} 47 (1993), 4169--4181.

\bibitem{DGG98} R.~Dal Passo, H.~Garcke, and G.~Gr\"un.
 On a fourth order degenerate parabolic equation:
 global entropy estimates and qualitative behaviour of solutions.
 {\em SIAM J. Math. Anal.} 29 (1998), 321--342.

\bibitem{DMR05} P.~Degond, F.~M\'ehats, and C.~Ringhofer. Quantum energy-transport
and drift-diffusion models. {\em J. Stat. Phys.} 118 (2005), 625--665.

\bibitem{DLSS91A} B.~Derrida, J.~Lebowitz, E.~Speer, and H.~Spohn.
Dynamics of an anchored Toom interface. {\em J.~Phys.~A: Math.~Gen.} 24 (1991), 4805--34.

\bibitem{DMM10} B.~D\"uring, D.~Matthes, and J.~P.~Mili\v si\'c.
A gradient flow scheme for nonlinear fourth order equations.
{\em Discrete Contin. Dyn. Syst. Ser. B}, 14 (2010),  935--959.  

\bibitem{FuMa10} D.~Furihata, T.~Matsuo. Discrete Variational Derivative Method,
\emph{Chapman and Hall}, 2010.



\bibitem{Fis13} J.~Fischer. Uniqueness of solutions of the Derrida-Lebowitz-Speer-Spohn 
equation and quantum drift-diffusion models. {\em Comm.~Partial Differential Equations} 38 
(2013), 2004--2047.

\bibitem{GiaOtt01} L.~Giacomelli and F.~Otto. Variational formulation for the lubrication
approximation of the Hele-Shaw flow.
{\em Calc.~Var.~PDEs}, 13 (2001), 377--403.


\bibitem{GST09} U.~Gianazza, G.~Savar\'e, and G.~Toscani.
The Wasserstein gradient flow of the Fisher information and the quantum
drift-diffusion equation. {\em Arch. Ration. Mech. Anal.} 194 (2009), 133--220.

\bibitem{HoMa04} A.~E.~Hosoi, and L.~Mahadevan. Peeling, healing and bursting in a lubricated elastic sheet.
{\em Phys.~Rev.~Lett.} 93 (2004).

\bibitem{JPR06} C.~Josserand, Y.Pomeau, and S.~Rica. Self-similar singularities
in the kinetics of condensation. 
\emph{J.~of Low Temp.~Physics} 145 (2006), 231--265.

\bibitem{JuMa06} A.~J\"ungel and D.~Matthes. An algorithmic construction of entropies in higher-order
nonlinear PDEs.
{\em Nonlinearity} 19 (2006), 633--659.

\bibitem{JuMa08} A.~J\"ungel and D.~Matthes. The Derrida-Lebowitz-Speer-Spohn
equation: existence, non-unique\-ness, and decay rates of the solutions.
{\em SIAM J. Math. Anal.} 39 (2008), 1996--2015.

\bibitem{JuMi09} A.~J\"ungel and J.-P.~Mili\v{s}i\'{c}.   
 A sixth-order nonlinear parabolic equation for quantum systems.   
 {\em SIAM J. Math. Anal.} 41 (2009), 1472--1490.  

\bibitem{JuPi00} A.~J\"ungel and R.~Pinnau. Global non-negative solutions of a
nonlinear fourth-oder parabolic equation for quantum systems.
{\em SIAM J.~Math.~Anal.} 32 (2000), 760--777.

\bibitem{JuPi01}A.~J\"ungel and R.~Pinnau. A positivity preserving numerical scheme for a 
nonlinear fourth-order parabolic equation. {\em SIAM J. Num. Anal.} 39 (2001), 385-406.

\bibitem{JuVi07} A.~J\"ungel and I.~Violet. First-order entropies for the 
Derrida-Lebowitz-Speer-Spohn equation. {\em Discrete Cont. Dyn. Sys. B} 8 (2007), 861-877.

\bibitem{King89} J.~R.~King. The isolation oxidation of silicon the reaction-controlled case. 
{SIAM J.~Appl.~Math.} 49 (1989), 1064--1080.

\bibitem{LPN13} J.~R.~Lister, G.~G.~Peng, and J.~A.~Neufeld. Spread of a viscous fluid beneath an elastic
sheet. {\em Phys.~Rev.~Lett.} 111 (15) (2013).

\bibitem{MaMa16} J.~Maas, D.~Matthes. 
Long-time behavior of a finite volume discretization for a fourth order diffusion equation.
{\em Nonlinearity} 29 (2016).

\bibitem{MaOs17} D.~Matthes, H.~Osberger. 
A Convergent Lagrangian Discretization for a Nonlinear Fourth-Order Equation.
{\em Foundations of Computational Mathematics} 17 (2017), 73--126.

\bibitem{Mye98} T.~Myers. Thin films with high surface tension. {\em SIAM Rev.} 40 (1998), 441-462.

\bibitem{NoSi10} A.~Novick-Cohen, A.~Shishkov. 
The thin film equation with backwards second order diffusion. 
{\em Interfaces and Free Boundaries} 12 (2010), 463-496.

\bibitem{ODB97} A. Oron, S. H. Davis, S. G. Bankoff. 
Long-scale evolution of thin liquid films. {\em Rev. Mod. Phys.} 69 (1997), 931-980.

\bibitem{Tar51} A.~Tarski.   
 {\em A Decision Method for Elementary Algebra and Geometry}.   
 University of California Press, Berkeley, CA, 1951.  


\bibitem{WBB04} T.~P.~Witelski, A.~J.~Bernoff, A.~Bertozzi. 
Blowup and dissipation in a critical case unstable thin film equation. 
{\em Euro.~Jnl.~of Applied Mathematics} 15 (2004), 223-256.

\end{thebibliography}
\end{document}